\newcommand{\BC}{{\mathbb {C}}} 
 \newcommand{\BF}{{\mathbb {F}}}
\newcommand{\BG}{{\mathbb {G}}}
\newcommand{\BQ}{{\mathbb {Q}}}
 \newcommand{\BZ}{{\mathbb {Z}}}
\newcommand{\CE}{{\mathcal {E}}} 
\newcommand{\CG}{{\mathcal {G}}} 
\newcommand{\CI}{{\mathcal {I}}} 
 \newcommand{\CL}{{\mathcal {L}}}
\newcommand{\CM}{{\mathcal {M}}} 
\newcommand{\CO}{{\mathcal {O}}}
\newcommand{\fa}{{\mathfrak{a}}} \newcommand{\fb}{{\mathfrak{b}}}
\newcommand{\fc}{{\mathfrak{c}}} 
 \newcommand{\ff}{{\mathfrak{f}}}
\newcommand{\fm}{{\mathfrak{m}}} 
 \newcommand{\fp}{{\mathfrak{p}}}
\newcommand{\fq}{{\mathfrak{q}}}
 \newcommand{\fB}{{\mathfrak{B}}}
\newcommand{\fC}{{\mathfrak{C}}} \newcommand{\fD}{{\mathfrak{D}}}
\newcommand{\fG}{{\mathfrak{G}}} 
\newcommand{\fI}{{\mathfrak{I}}} 
\newcommand{\fM}{{\mathfrak{M}}} 
 \newcommand{\fP}{{\mathfrak{P}}}
 \newcommand{\fR}{{\mathfrak{R}}}
\newcommand{\msg}{\mathscr{G}}
\newcommand{\msl}{\mathscr{L}}
\newcommand{\End}{{\mathrm{End}}}
\newcommand{\Gal}{{\mathrm{Gal}}}
\newcommand{\idele}{id\'{e}le~}
\newcommand{\Ker}{{\mathrm{Ker}}}
\renewcommand{\mod}{\, \mathrm{mod}\,\, }
\newcommand{\Neron}{N\'{e}ron~}
\newcommand{\ord}{{\mathrm{ord}}}
\newcommand{\ov}{\overline}
\newcommand{\pair}[1]{\langle {#1} \rangle}
\renewcommand{\Re}{{\mathrm{Re}}}
\newcommand{\sfrac}[2]{\left(\frac {#1}{#2}\right)}
\newcommand{\wt}{\widetilde}                        
\newcommand{\wh}{\widehat}                      
\newcommand{\wpair}[1]{\left\{{#1}\right\}}
\def\mat(#1,#2,#3,#4){
  \begin{pmatrix}
  #1 & #2 \\ #3 & #4
  \end{pmatrix}
}
\newcommand{\thmref}[1]{Theorem~\ref{#1}}
\newcommand{\lemref}[1]{Lemma~\ref{#1}}
\newcommand{\propref}[1]{Proposition~\ref{#1}}
\newcommand{\corref}[1]{Corollary~\ref{#1}}
\theoremstyle{plain}
\newtheorem{thm}{Theorem}[section] \newtheorem{cor}[thm]{Corollary}
\newtheorem{lem}[thm]{Lemma}  \newtheorem{prop}[thm]{Proposition}
\theoremstyle{definition}
\numberwithin{equation}{section}
\begin{document}

\begin{abstract}
Let $K = \BQ(\sqrt{-q})$, where $q$ is any  prime number congruent to $7$ modulo $8$, and let $\CO$ be the ring of integers of $K$. The prime $2$ splits in $K$, say $2\CO = \fp \fp^\ast$, and there is a unique $\BZ_2$-extension $K_\infty$ of $K$ which is unramified outside $\fp$. Let $H$ be the Hilbert class field of $K$, and write $H_\infty = HK_\infty$.  Let $M(H_\infty)$ be the maximal abelian $2$-extension of $H_\infty$ which is unramified outside the primes above $\fp$, and put $X(H_\infty) = \Gal(M(H_\infty)/H_\infty)$. We prove that $X(H_\infty)$ is always a finitely generated $\BZ_2$-module, by an elliptic analogue of Sinnott's cyclotomic argument. We then use this result to prove for the first time the weak $\fp$-adic Leopoldt conjecture for the compositum $J_\infty$ of $K_\infty$ with arbitrary quadratic extensions $J$ of $H$. We also prove some new cases of the finite generation of the Mordell-Weil group $E(J_\infty)$ modulo torsion of certain  elliptic curves $E$ with complex multiplication by $\CO$.
\end{abstract}

%%%%%%

\title[Iwasawa's $\mu=0$ conjecture and the weak Leopoldt conjecture]{Analogues of Iwasawa's $\mu=0$ conjecture and the weak Leopoldt conjecture for a non-cyclotomic $\BZ_2$-extension}
\author{Junhwa Choi}
\address{Department of Mathematics, POSTECH, Pohang, Republic of Korea}
\email{jhchoi.math@gmail.com}

\author{Yukako Kezuka}
\address{Fakult\"at f\"ur Mathematik, Universit\"at Regensburg, Regensburg, Germany}
\email{Yukako.Kezuka@mathematik.uni-regensburg.de}

\author{Yongxiong Li}
\address{Yau Mathematical Sciences Center, Tsinghua University, Beijing, China}
\email{yongxiongli@math.tsinghua.edu.cn}

%%%%%%

\maketitle
\tableofcontents

%%%%%%
\section{Introduction}

Let $K = \BQ(\sqrt{-q})$, where $q$ is any prime number congruent to $7$ modulo $8$, let $\CO$ be the ring of integers of $K$, and let $H$ be the Hilbert class field of $K$. We fix once and for all an embedding of $K$ into $\BC$. By the theory of complex multiplication, we have $H = K(j(\CO))$ where $j$ is the classical modular function; in particular, this fixes an embedding of $H$ into $\BC$. We write $G$ for the Galois group of $H$ over $K$, and $h$ for the class number of $K$. Then $h$ is odd because $K$ has prime discriminant. The prime $p = 2$ splits in $K$, and we write
$$
2\CO = \fp \fp^\ast.
$$
Throughout the remainder of the paper, we fix once and for all an embedding $\iota_\fp$ of $\ov{K}$ into $\BC_2$, which induces the prime $\fp$. By global class field theory, $K$ has a unique $\BZ_2$-extension which is unramified outside $\fp$, which we denote by $K_\infty/K$. Note that the prime $\fp$ is totally ramified in $K_\infty$ because $h$ is odd. Define
$$
H_\infty = HK_\infty, ~\Gamma = \Gal(H_\infty/H).
$$
We write $M(H_\infty)$ for the maximal abelian $2$-extension of $H_\infty$ which is unramified outside the primes of $H_\infty$ above $\fp$, and put
$$
X(H_\infty) = \Gal(M(H_\infty)/H_\infty).
$$
Note that $M(H_\infty)$ is clearly Galois over $H$, and thus $\Gamma$ acts continuously on $X(H_\infty)$ in the usual fashion via inner automorphisms. This action endows $X(H_\infty)$ with the structure of a module over the Iwasawa algebra $\Lambda(\Gamma)$ of $\Gamma$. In fact, it has long been known that $X(H_\infty)$ is a finitely generated torsion module over $\Lambda(\Gamma)$. In the present paper, we prove the following stronger theorem, which is equivalent to saying that the Iwasawa $\mu$-invariant for the $\Lambda(\Gamma)$-module $X(H_\infty)$ vanishes.

\begin{thm}\label{thm1.1}
The Galois group $X(H_\infty)$ is a finitely generated $\BZ_2$-module.
\end{thm}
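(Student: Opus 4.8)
The plan is to adapt Sinnott's argument---originally used to prove the vanishing of the cyclotomic $\mu$-invariant via an analysis of power series and the action of a twisting automorphism---to the present elliptic setting, where the role of cyclotomic units is played by elliptic units attached to the CM elliptic curve with good reduction at $\fp$. First I would identify $X(H_\infty)$, up to a controlled error, with an Iwasawa module built from the local units at the primes above $\fp$ modulo the closure of the global elliptic units. Since $2$ splits in $K$ and $\fp$ is totally ramified in $K_\infty$, the completion of $H_\infty$ at a prime above $\fp$ is a $\BZ_2$-extension of a $2$-adic field of the Lubin--Tate type attached to the formal group of the CM curve; this identifies the relevant local modules with spaces of power series, so that $\mu = 0$ becomes a statement that a certain power series (a $2$-adic $L$-function / Coleman series of the elliptic units) is not divisible by $2$.

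The key steps, in order, would be: (i) use the standard descent/reflection machinery to relate $X(H_\infty)$ to $\mathfrak{X}_\infty := $ the projective limit of $\fp$-ideal class groups, or better to the quotient of semilocal units by global elliptic units, absorbing the class number $h$ (which is odd, hence invertible in $\BZ_2$) and the fact that $H/K$ has odd degree; (ii) invoke the Coleman map to present the local units at $\fp$ over $H_\infty$ as $\Lambda(\Gamma)$-modules of power series in one variable, and express the image of the elliptic units as an explicit element $g \in \Lambda(\Gamma) \cong \BZ_2[[T]]$; (iii) run Sinnott's descent: consider the action of a nontrivial automorphism $\sigma$ (coming from $\Gal(H/K)$ or from the twist by a character, acting on $T$ by $T \mapsto (1+T)^{c} - 1$ for a suitable $2$-adic unit $c$ of infinite order), and show that if $\mu(X(H_\infty)) > 0$ then $g \equiv 0 \pmod 2$ is forced to be compatible with infinitely many such twists, which contradicts the nonvanishing of $g \bmod 2$; (iv) verify the nonvanishing of $g \bmod 2$ by a direct computation with the $q$-expansion of the relevant elliptic units / Eisenstein series at the split prime $\fp$, using $q \equiv 7 \bmod 8$ to pin down the leading term.

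The main obstacle I expect is step (iv) together with the precise formulation of the twisting operator in step (iii): Sinnott's cyclotomic proof exploits the fact that $\BZ_2^\times$ acts on the cyclotomic tower and that the measure defining the Kubota--Leopoldt $L$-function transforms in a computable way; here $K_\infty/K$ is \emph{not} cyclotomic, so one must construct the analogous ``extra'' automorphisms intrinsically---presumably from the $G$-action on $H_\infty$ and from Galois-equivariance of the elliptic units under $\Gal(\ov{K}/K)$---and check that they generate a large enough group to kill any putative $\mu > 0$. Establishing that the Coleman/measure-theoretic incarnation of the elliptic units is a unit times a distribution whose reduction mod $2$ is nonzero, uniformly across the relevant characters of $G$, is where the hypothesis $q \equiv 7 \pmod 8$ (which guarantees $-q \equiv 1 \bmod 8$, so that $2$ splits and the local setup is of the expected Lubin--Tate shape, and also controls the $2$-part of certain special values) must be used decisively. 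Once these local computations are in place, the global conclusion that $X(H_\infty)$ is finitely generated over $\BZ_2$ follows formally, since a finitely generated torsion $\Lambda(\Gamma)$-module has $\mu = 0$ if and only if it is finitely generated over $\BZ_2$.
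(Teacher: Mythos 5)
Your outline names the right objects (elliptic units, the height--one formal group at $\fp$, Sinnott's method), but the core of the proof --- the elliptic analogue of Sinnott's argument --- is precisely the part you leave open, and the mechanism you propose for it is not the one that works. Sinnott's proof does not run by producing twisting automorphisms $T\mapsto (1+T)^{c}-1$ and deriving a contradiction from ``compatibility with infinitely many twists''; its point is that the measure giving the $2$-adic $L$-function is attached to a \emph{rational function}, and the $\mu$-invariant of its $\Gamma$-transform equals the $\mu$-invariant of an explicitly associated rational function, whose nonvanishing modulo $2$ is then checked directly. In the present non-cyclotomic setting there is no analogue of the $\BZ_2^{\times}$-action you hope to extract ``intrinsically'': $G=\Gal(H/K)$ is finite of odd order and acts by permuting the conjugate curves $A^{\sigma}$, not by twisting the $\BZ_2$-variable, so step (iii) as formulated has no content. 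What replaces it (as in Gillard--Schneps, and in the paper's \S 2--3) is the isomorphism $\beta_v:\wh{\BG}_m\stackrel{\sim}{\to}\wh{A^v}$ over $\CI_\fp$, available because $2$ splits so the formal group has height one: the measure is the one attached to $F(w)=f(\beta_v(w))$ with $f$ a rational function on $A$ built from the division-point functions $R_{\lambda,\fa}$, one proves the elliptic analogue of Sinnott's theorem $\mu(\msl(F))=\mu(\wt F+\wt F\circ(-1))$ (Lemma 3.3), and the nonvanishing mod $2$ is a residue computation on the reduced curve $\wt A$: the logarithmic derivative has poles at the points $M-W$ with $M\in A_{\lambda_i}\setminus\{0\}$, $W\in A_\fa$, with unit residues, reduction mod $v$ is injective on these points, while the poles involving $A_\fp$ collapse (Lemma 3.5). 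Your step (iv) via ``$q$-expansions of elliptic units/Eisenstein series'' has no clear meaning here --- the relevant expansion is the $t$-expansion on the formal group at $v$ --- and $q\equiv 7\bmod 8$ enters only through the splitting of $2$ and the oddness of $h$, not through leading terms of special values.

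There is also a gap in step (i): identifying $X(H_\infty)$ ``up to a controlled error'' with semilocal units modulo elliptic units is not a formal descent/reflection fact; absent the main conjecture it requires the analytic class number formula. The paper carries this out in \S 4 by the Coates--Wiles computation of $\ord_2([M(H_n):H_\infty])$ at each finite layer (Theorem 4.1), rewriting that index in terms of the values $L_{\fp,\ff_\epsilon}(0,\epsilon)$ and comparing the growth in $n$ with Iwasawa's formula $2^n\mu+\lambda n+c$; this comparison is what transfers $\mu^{\mathrm{an}}=0$ to $\mu(X(H_\infty))=0$. Your Coleman-map route through the limit module of units modulo elliptic units can be made to work, but it rests on the same class-number-formula input, and at $p=2$ there are extra factors of $2$ (the quadratic layer $F/H$, the involution $\Delta$, the $\tfrac12\log$ in Lemma 2.2) that must be tracked; the hypotheses that $2$ splits and $h$ is odd are used exactly there. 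As written, the proposal assembles the correct ingredients but leaves both the transfer step and the actual $\mu=0$ mechanism unproven.
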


\noindent  In \S 5, we give some interesting numerical computations of the group $X(H_\infty)$, which show, somewhat surprisingly, that in fact it is zero for all primes $q < 500$ with $q \equiv 7 \mod 8$ except $q = 431$. Moreover, by a simple application of Nakayama's lemma, we obtain the following corollary of Theorem \ref{thm1.1}. Let $J$ denote any quadratic extension of the Hilbert class field $H$,  and write $J_\infty = J K_\infty$. Let $M(J_\infty)$ be the maximal abelian $2$-extension of $J_\infty$ which is unramified outside the primes above $\fp$, and put $X(J_\infty) = \Gal(M(J_\infty)/J_\infty)$.

\begin{cor}\label{cor1.2}
For every quadratic extension $J$ of $H$, the Galois group $X(J_\infty)$ is a finitely generated $\BZ_2$-module.
\end{cor}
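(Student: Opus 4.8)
The plan is to deduce this directly from Theorem \ref{thm1.1} by a standard Nakayama-lemma argument, exploiting the fact that $J_\infty/H_\infty$ is a finite extension. First I would observe that $J_\infty = J H_\infty$, so $J_\infty/H_\infty$ is a quadratic (hence finite) extension, and $M(H_\infty) \subseteq M(J_\infty)$ because every abelian $2$-extension of $H_\infty$ unramified outside $\fp$ gives, after base change to $J_\infty$, an abelian $2$-extension of $J_\infty$ unramified outside the primes above $\fp$. Next I would pass to the maximal abelian $2$-extension picture over $J_\infty$: set $\mathcal{G} = \Gal(M(J_\infty)/H_\infty)$, which is a compact $2$-adic group sitting in an exact sequence
\begin{equation*}
1 \longrightarrow X(J_\infty) \longrightarrow \mathcal{G} \longrightarrow \Gal(J_\infty/H_\infty) \longrightarrow 1,
\end{equation*}
where $\Gal(J_\infty/H_\infty)$ is finite of order dividing $2$.

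The key step is to bound $X(J_\infty)$ in terms of $X(H_\infty)$. I would use the restriction map $X(J_\infty) = \Gal(M(J_\infty)/J_\infty) \to \Gal(M(H_\infty)/H_\infty) = X(H_\infty)$ induced by restriction of automorphisms (using $M(H_\infty) \subseteq M(J_\infty)$, which holds after possibly noting $M(H_\infty)J_\infty \subseteq M(J_\infty)$). The kernel of this map is $\Gal(M(J_\infty)/M(H_\infty)J_\infty)$, and I would argue this kernel is itself a finitely generated $\BZ_2$-module: it is a quotient of the Galois group over $J_\infty$ of a field cut out by the $2$-part of a ray class group / inertia data over the finite layer $J_\infty$, and more cleanly, one sees it is annihilated by the action relative to the finite group $\Gal(J_\infty/H_\infty)$ so that a Nakayama/co-descent argument applies. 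Concretely, by Nakayama's lemma it suffices to show $X(J_\infty)/I$ is finite, where $I$ is the augmentation ideal of $\Lambda(\Gamma)$ relative to a suitable open subgroup; since $\Gamma$ has finite index in $\Gal(H_\infty/K)$ and acts on both modules compatibly, finite generation of $X(H_\infty)$ over $\BZ_2$ forces finite generation of $X(J_\infty)$ over $\BZ_2$ via the finiteness of $\Gal(J_\infty/H_\infty)$ and the five-lemma applied to the cohomology of this finite group acting on the relevant class-field-theoretic modules.

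The main obstacle I anticipate is making the comparison map $X(J_\infty) \to X(H_\infty)$ have \emph{finitely generated kernel} in a way that is genuinely rigorous rather than hand-waved: one must check that the ramification condition (unramified outside primes above $\fp$) is preserved under the finite base change $H_\infty \rightsquigarrow J_\infty$ and that no new $\BZ_2^{\infty}$-worth of extensions can appear, which comes down to the finiteness of the set of primes of $H_\infty$ above $\fp$ together with local considerations at those primes. Once that is in hand, the corollary follows formally: an extension of a finitely generated $\BZ_2$-module by a finitely generated $\BZ_2$-module (the finite group $\Gal(J_\infty/H_\infty)$ contributing nothing to rank) is again finitely generated over $\BZ_2$. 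I expect the whole argument to be short, occupying no more than a paragraph in the final text, with Theorem \ref{thm1.1} doing all the real work.
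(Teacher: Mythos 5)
Your overall strategy --- descend from $J_\infty$ to $H_\infty$ through the finite group $\Delta=\Gal(J_\infty/H_\infty)$, apply Nakayama's lemma, and let Theorem \ref{thm1.1} do the real work --- is the same as the paper's, but two steps in your sketch do not work as written. First, the Nakayama reduction is misformulated: showing that $X(J_\infty)/IX(J_\infty)$ is finite, with $I$ the augmentation ideal of $\Lambda(\Gamma)$, would only give finite generation over $\Lambda(\Gamma)$, not over $\BZ_2$ (the module $\Lambda(\Gamma)/(2)$ has finite quotient by the augmentation ideal but is not finitely generated over $\BZ_2$). The correct local ring is $\BZ_2[\Delta]$ with maximal ideal $\fm=(2,\delta-1)$: one must show that $X(J_\infty)/\fm X(J_\infty)$ is finite, and, using the inner-automorphism action of $\Delta$ coming from your exact sequence, one identifies $X(J_\infty)/(\delta-1)X(J_\infty)$ with $\Gal(R/J_\infty)$, where $R$ is the maximal subfield of $M(J_\infty)$ abelian over $H_\infty$.

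Second, and more seriously, the point you yourself flag as the main obstacle --- that the kernel $\Gal(M(J_\infty)/M(H_\infty)J_\infty)$ of your restriction map, equivalently $\Gal(R/J_\infty)$ after this co-descent, is a finitely generated $\BZ_2$-module --- is essentially the whole content of the corollary, and your sketch locates the difficulty at the wrong primes. The field $R$ is abelian over $H_\infty$ but need not lie in $M(H_\infty)$: it may be ramified at primes of $H_\infty$ \emph{away} from $\fp$ which ramify in $J_\infty/H_\infty$; the finiteness of the set of primes above $\fp$ and ``local considerations at those primes'' are not the issue. What rescues the argument is that the set $S$ of primes of $H_\infty$ prime to $\fp$ and ramified in $J_\infty$ is finite (each prime of $K$ has only finitely many primes of $K_\infty$, hence of $H_\infty$, above it, and only finitely many primes of $H$ ramify in $J$), and that the inertia subgroup in $\Gal(R/H_\infty)$ of each prime of $S$ has order at most $2$, since it meets $\Gal(R/J_\infty)$ trivially ($R\subseteq M(J_\infty)$ is unramified over $J_\infty$ outside $\fp$) and therefore injects into $\Delta$. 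The fixed field $R'$ of the subgroup generated by these inertia groups is then an abelian $2$-extension of $H_\infty$ unramified outside $\fp$, so $R'\subseteq M(H_\infty)$ and $\Gal(R'/H_\infty)$ is finitely generated by Theorem \ref{thm1.1}, while $[R:R']\le 2^{\#(S)}$. This yields finite generation of $\Gal(R/J_\infty)$, hence finiteness of $X(J_\infty)/\fm X(J_\infty)$, and Nakayama over $\BZ_2[\Delta]$ concludes. Without this inertia-and-finiteness argument, your proposal is a plan rather than a proof.
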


\noindent We point out that this corollary implies in particular (see \cite{JC1}) that the weak $\fp$-adic Leopoldt conjecture is valid for the $\BZ_2$-extension $J_\infty/J$. This is the first example where such a weak $\fp$-adic Leopoldt conjecture has been proven for extensions of $K$ which are not in general abelian over $K$.

In the proof of \thmref{thm1.1}, we shall make use of what is, in some sense, the simplest elliptic curve with complex multiplication by $\CO$, which was introduced by Gross \cite{GR1}. He has proven that there exists a unique elliptic curve defined over $\BQ(j(\CO))$, which we shall denote by $A$, whose $j$-invariant is equal to $j(\CO)$, whose ring of $H$-endomorphisms is equal to $\CO$, whose minimal discriminant ideal in $H$ is equal to $(-q^3)$, and which is isogenous to all of its conjugates under the Galois action of $G$. The Gr\"ossencharacter of $A$ is the Hecke character $\psi$ of $H$ with conductor $\fq = (\sqrt{-q})$, which, on ideals $\fa$ of $H$ prime to $\fq$ is defined by the formula
$$
\psi(\fa) = \alpha,~\text{where $(\alpha) = N_{H/K}\fa$ and $\alpha$ is a square modulo $\fq$.}
$$
We have the identity
\begin{equation}\label{1.1}
\psi = \phi \circ N_{H/K}
\end{equation}
where $\phi$ is the following Gr\"ossencharacter of $K$ with conductor $\fq$. Let $B$ denote the abelian variety over $K$, which is the restriction of scalars  from $H$ to $K$ of $A$.  Let $T = \End_K(B)\otimes \BQ$. Then $T$ is an extension of degree $h$ of $K$, and for ideals $\fb$ of $K$ prime to $\fq$ we have $\phi(\fb) = \beta$, where $\beta$ is the unique element of $T$ such that $\fb^h= (\beta^h)$ and $\beta^h$ is a square modulo $\fq$. In particular, we remark that $B$ is isomorphic over $H$ to the product of the elliptic curves $A^\sigma$, where $\sigma$ runs over the element of $G$. It follows that for any integral ideal $\fb$ of $K$ prime to $\fq$, the endomorphism $\phi(\fb)$ of $B/K$ defines a unique isogeny defined over $H$
\begin{equation}\label{1.2}
\eta_{A^\sigma}(\fb): A^\sigma \rightarrow A^{\sigma\sigma_\fb}
\end{equation}
where $\sigma_\fb$ denotes the Artin symbol of $\fb$ in $G$, whose kernel is $A_\fb^\sigma$. For a more detailed discussion on this isogeny, see \cite{GR1}. Moreover, Gross \cite{GR2} has proven that $A$ has a global minimal Weierstrass equation over $H$. Hence we fix once and for all such a global minimal equation
\begin{equation}\label{1.3}
y^2 + a_1xy + a_3y = x^3 + a_2x^2 + a_4x + a_6,
\end{equation}
with coefficients $a_i$ which are all integers in $H$. Moreover, there is an interesting new application of Corollary \ref{cor1.2}  to certain quadratic twists of $A$. Let $E$ denote a twist of $A$ by an arbitrary quadratic extension of $H$, whose conductor is relatively prime to $2q$. Define
\begin{equation}\label{1.4}
F = H(E_{\fp^2}),~F_\infty = H(E_{\fp^\infty}),~\CG = \Gal(F_\infty/H),~\Delta = \Gal(F/H).
\end{equation}
The fields $F$ and $F_\infty$ are, of course, abelian extensions of $H$, but we stress that they are not in general abelian over $K$. Here $\Delta$ is cyclic of order 2, and it can easily be seen that
$F_\infty = FK_\infty$. We recall that the $\fp^\infty$-Selmer group of $E$ over $F_\infty$ is defined by
$$
S_{\fp^\infty}(E/F_\infty) = \Ker\left(H^1(F_\infty, E_{\fp^\infty}) \to \prod_{v}H^1(F_{\infty,v}, E)(\fp)\right)
$$
where $v$ runs over all finite places of $F_\infty$.

\begin{thm}\label{thm1.3}
Let $E$ be a twist of $A$ by any quadratic extension of $H$ of conductor prime to $2q$. Then the Pontrjagin dual of  $S_{\fp^\infty}(E/F_\infty)$ is always a finitely generated $\BZ_2$-module. In particular, both $E(F_\infty)$ and $E(H_\infty)$ modulo torsion are finitely generated abelian groups.
\end{thm}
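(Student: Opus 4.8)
The plan is to reduce \thmref{thm1.3} to \corref{cor1.2} by recognising $F_\infty$ as one of the fields $J_\infty$ occurring there, and then to translate between the Selmer group and the arithmetic Galois group by the classical methods of Coates--Wiles and Rubin. The decisive first point is that $F$ is a quadratic extension of $H$: because the complex multiplication of $E$ is defined over $H$, the group $\Delta=\Gal(F/H)$ acts on $E_{\fp^2}\cong\CO/\fp^2$ through $\CO$-linear automorphisms, so it embeds into $(\CO/\fp^2)^\times\cong\BZ/2$, and (as recorded before the statement, using that the twisting extension has conductor prime to $2$, so that $E$ keeps good, indeed ordinary, reduction at $\fp$) this embedding is an isomorphism. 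Setting $J:=F$ and using $F_\infty=H(E_{\fp^\infty})=FK_\infty$ as noted above, we obtain $F_\infty=J_\infty$, and \corref{cor1.2} tells us that $X(F_\infty)=\Gal(M(F_\infty)/F_\infty)$ is a finitely generated $\BZ_2$-module. (Should $F=H$, one invokes \thmref{thm1.1} in its place.)

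The second step is the standard passage from this Galois group to the Selmer group. Since $F_\infty$, and each of its completions $F_{\infty,v}$, contains all the $\fp$-power division points of $E$, the corresponding absolute Galois groups act trivially on $E_{\fp^\infty}$, whence $H^1(F_\infty,E_{\fp^\infty})=\mathrm{Hom}_{\mathrm{cont}}(\Gal(\ov{F}_\infty/F_\infty),E_{\fp^\infty})$, and likewise locally. Imposing the local Selmer conditions forces a homomorphism to be unramified away from the primes above $\fp$ and the finitely many primes above $q$ --- where, $q$ being odd, only a finitely generated $\BZ_2$-module of (tame) ramification survives --- and to satisfy the local Kummer condition at the primes above $\fp$. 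As in the Iwasawa theory of elliptic curves with complex multiplication (compare \cite{JC1}), it follows that the Pontrjagin dual of $S_{\fp^\infty}(E/F_\infty)$ is a subquotient of $X(F_\infty)$ together with a finitely generated $\BZ_2$-module, and hence, by the first step, is itself a finitely generated $\BZ_2$-module.

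For the statement about Mordell--Weil groups, I would feed this into the exact sequence
\[
0 \longrightarrow E(F_\infty)\otimes_{\CO}K_\fp/\CO_\fp \longrightarrow S_{\fp^\infty}(E/F_\infty) \longrightarrow \Sha(E/F_\infty)(\fp) \longrightarrow 0,
\]
which shows that $E(F_\infty)\otimes_{\CO}K_\fp/\CO_\fp$ has finite $\BZ_2$-corank, so that $E(F_\infty)$ has finite Mordell--Weil rank. Together with the finiteness of the prime-to-$\fp$ torsion of $E(F_\infty)$ and a standard Iwasawa-theoretic descent up the tower $F_\infty/F$ (the ranks of the finite layers stabilise and no rational point becomes infinitely divisible in the tower), this gives that $E(F_\infty)$ modulo torsion is finitely generated. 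Finally $H_\infty\subseteq F_\infty$, so $E(H_\infty)$ modulo torsion is a subgroup of the finitely generated abelian group $E(F_\infty)$ modulo torsion, and is therefore finitely generated as well.

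The hard part is the comparison made in the second step, and inside it the local analysis at the primes above $\fp$: there $F_\infty$ is deeply ramified, and one must show with care that the local Kummer condition changes the global abelian pro-$2$ Galois group only by a finitely generated $\BZ_2$-module, while also controlling the contributions of the bad primes above $q$. Once this essentially classical but delicate comparison is available, the genuinely new ingredient --- the finite generation of $X(F_\infty)$ over $\BZ_2$ --- is furnished entirely by \thmref{thm1.1} and \corref{cor1.2}, that is, by the elliptic analogue of Sinnott's cyclotomic argument which is the heart of the paper.
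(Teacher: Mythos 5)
Your proposal is correct in substance and follows the same skeleton as the paper: take $J=F=H(E_{\fp^2})$, which is quadratic over $H$, apply \corref{cor1.2} to conclude that $X(F_\infty)$ is a finitely generated $\BZ_2$-module, transfer this to the Selmer group by the classical CM descent of \cite{JC1}, and deduce the Mordell--Weil statements, using $H_\infty\subset F_\infty$ for the last assertion. Where you genuinely diverge is the comparison step. The paper invokes the fact (the argument of Lemma 2.1 of \cite{CC} generalizes) that $E$ has good reduction \emph{everywhere} over $F$, not merely at $\fp$: the quadratic extension $F/H$ is ramified exactly at the primes above $q$ and at the primes dividing the conductor of the twisting extension, and this absorbs the additive reduction there; together with $E_{\fp^\infty}\subset F_\infty$ this yields the exact identification $S_{\fp^\infty}(E/F_\infty)=\mathrm{Hom}(X(F_\infty),E_{\fp^\infty})$, so there are no error terms to control. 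You instead keep auxiliary bad primes and bound the discrepancy: since each prime of $H$ has only finitely many primes of $F_\infty$ above it and, away from residue characteristic $2$, abelian pro-$2$ inertia is tame and pro-cyclic, the relaxed group differs from $X(F_\infty)$ by a finitely generated $\BZ_2$-module. That is a valid alternative for the qualitative statement, and it buys you independence from the everywhere-good-reduction input; but two small points should be fixed or made explicit. First, your list of allowed ramification must include the primes dividing the conductor of the quadratic twist (prime to $2q$ but in general nontrivial), not only the primes above $q$; the same tame argument covers them. Second, the containment of the Selmer group in the relaxed Hom-group also needs Selmer classes to be unramified at the primes above $\fp^\ast$, which is part of the classical local analysis that you, like the paper, delegate to \cite{JC1}. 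With those provisos there is no gap; the paper's route via everywhere good reduction over $F$ is simply the cleaner packaging of the same reduction to \thmref{thm1.1} and \corref{cor1.2}.
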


\noindent We stress that this result was unknown previously, except in the very special case when $E$ is the quadratic twist of $A$ by the compositum with H of a quadratic extension of $K$. Moreover, none of the analytic results is known for such a curve $E$, for example, the construction of the $\fp$-adic $L$-function attached to $E$.

Our proof of Theorem \ref{thm1.1} uses an elliptic analogue of Sinnott's beautiful proof of the vanishing of the cyclotomic $\mu$-invariant. Considerable past work in this direction has already been done  by Gillard \cite{Gi2}, \cite{Gi1} and Schneps \cite{Sch} for split odd primes $p$. For the prime $p=2$ there has recently been independent work by Oukhaba and Vigui\'e \cite{OV}, which would seemingly include a proof of Theorem \ref{thm1.1}.  However, we give the full details of a rather different construction of the $\fp$-adic $L$-functions and the analogue of Sinnott's proof in our case, rather than the arguments sketched in \cite{OV}.

%%%%%%
\medskip
\section{Construction of the $\fp$-adic $L$-function}

The aim of this section is to construct the $\fp$-adic $L$-function attached to the curve $A/H$, by using the method of \cite{CW3} and \cite{CG}. In this section,  $F$ and $F_\infty$ will denote the fields defined by \eqref{1.4} in the special case in which $E = A$. Thus, we will have
$$
F = H(A_{\fp^2}),~F_\infty = H(A_{\fp^\infty}).
$$
Write $\chi_\fp: \CG \rightarrow \CO_\fp^\times = \BZ_2^\times$ for the character giving the action of $\CG = \Gal(F_\infty/H)$ on $A_{\fp^\infty}$. It is well-known that $A$ has good reduction everywhere over $F$ (the proof of Lemma 2.1 of \cite{CC} generalizes immediately to all of the curves $A$ discussed here). Thus we must have $[F:H] =2$, whence we see that $\chi_\fp$ is an isomorphism. Hence $\CG = \Gamma \times \Delta$, where $\Delta = \Gal(F/H)$ is of order $2$ and $\Gamma = \Gal(F_\infty/F)$ is isomorphic to $\BZ_2$. Note that all the primes of $H$ lying above the  $\fq$ must be ramified in the extension $F/H$, because $A/H$ has bad reduction at the primes of $H$ above $\fq$. As $H_\infty/H$ is unramified outside of the primes of $H$ dividing $\fp$, we see that $H_\infty \cap F = H$, whence we can also identify $\Gamma$ with the Galois group of $H_\infty/H$ under restriction.

Recall that $G$ denotes the Galois group of $H$ over $K$. Let $\fa$ be any non-zero integral ideal of K, which we will always assume is prime to $\fp\fq$. We write $\sigma_\fa$ for the  Artin symbol of $\fa$ in $G$, and $A^\fa$ for the image of $A$ under $\sigma_\fa$.  A global minimal Weierstrass equation for $A^\fa/H$,  and its associated \Neron differential $\omega^\fa$, are given respectively by just applying $\sigma_\fa$ to the coefficients of the equation \eqref{1.3}, and to the coefficients of its \Neron differential $\omega = dx/(2y+a_1x+a_3)$. We then define an element $\xi(\fa)$ of $H$ by the equation
\begin{equation}\label{2.1}
\eta_A(\fa)^\ast(\omega^\fa) = \xi(\fa)\omega.
\end{equation}
where $\eta_A(\fa): A \to A^\fa$ is the isogeny as defined in \eqref{1.2}.
We also write $\CL$ and $\CL_\fa$ for the period lattices of  $\omega$ and $\omega_\fa$. If we write $\CL = \Omega_\infty \CO$ where $\Omega_\infty \in \BC$, then we have $\CL_\fa = \xi(\fa)\Omega_\infty\fa^{-1}$. Note that the Weierstrass isomorphism $\fM(z,\CL_\fa)$ from $\BC/\CL_\fa$ to $A^\fa(\BC)$ is given by
$$
\left(\wp(z,\CL_\fa)- \frac{a_{1,\fa}^2+4a_{2,\fa}}{12},~\frac{1}{2}\left(\wp'(z,\CL_\fa)-a_{1,\fa}\left(\wp(z,\CL_\fa)-\frac{a_{1,\fa}^2+4a_{2,\fa}}{12}\right)-a_{3,\fa} \right)\right)
$$
where we simply write $a_{i,\fa}$ for $\sigma_\fa(a_i)$, and where $\wp(z,\CL_\fa)$ denotes the Weierstrass $\wp$-function of the lattice $\CL_\fa$.

Let $P = (x,y)$ denote a generic point of our global minimal Weierstrass equation for $A^\fa/H$. Given any non-zero element $\lambda$ of $\CO = \End_H(A)$ with $\lambda \neq \pm 1$ and $(\lambda, 6\fq) = 1$,  we define the rational function $R_{\lambda, \fa}(P)$ on $A^\fa$, with coefficients in $H$, by
$$
R_{\lambda, \fa}(P) = c_\fa(\lambda) \prod_{M \in V_\lambda} (x(P) - x(M))^{-1}
$$
where $V_\lambda$ denotes any set of representatives of the non-zero $\lambda$-division points on $A^\fa$ modulo $\wpair{\pm 1}$, and  $c_\fa(\lambda)$ is a unique $12$-th root in $H$ of $\Delta(\CL_\fa)^{N\lambda}/\Delta(\lambda^{-1}\CL_\fa)$ (see also Proposition 1 of the Appendix of \cite{JC2}). Here $\Delta$ denotes Ramanujan's $\Delta$-function. For each non-zero integral ideal $\fb$ of $K$ with $(\fb,\lambda\fq) = 1$, it is easily seen that we have (see Theorem 4 of the Appendix of \cite{JC2})
\begin{equation}\label{2.2}
R_{\lambda, \fa\fb}(\eta_{A^\fa}(\fb)(P)) = \prod_{U \in A^\fa_\fb} R_{\lambda, \fa} (P \oplus U).
\end{equation}

We introduce the index set $\CI$ consisting of all finite sets $\rho = \{(\lambda_i, n_i) \mid i=1, \cdots, r\}$ where $ r \geq 2$, $n_i \in \BZ$, $\lambda_i \neq \pm 1$ non-zero elements of $\CO$ with $(\lambda_i, 6\fq)=1$, and satisfying $\sum_{i=1}^r n_i(N\lambda_i-1) = 0$. Here $N\lambda_i$ denotes the norm from $K$ to $\BQ$ of $\lambda_i$. Given $\rho \in \CI$, we consider the product
\begin{equation}\label{2.3}
\fR_{\rho,\fa}(P) = \prod_{i=1}^r R_{\lambda_i, \fa}(P)^{n_i},
\end{equation}
which is also a rational function on $A^\fa/H$. Under the Weierstrass isomorphism, this rational function can be considered as a function on $\BC/\CL_\fa$ with variable $z$. Taking the derivative logarithm of this function, we have the following result.

\begin{prop}\label{prop2.1}
We have
$$
\frac{d}{dz} \log \fR_{\rho,\fa}(P) = \sum_{i=1}^r \sum_{k=2,\,\text{even}}^\infty -n_i \frac{\phi^k(\fa)}{\xi(\fa)^k\Omega_\infty^k} \left(N\lambda_i - \lambda_i^k \right) L(\bar{\phi}^k, \sigma_\fa, k)z^{k-1}.
$$
In particular, for each even integer $k > 0$, we have
\begin{equation}\label{2.4}
\sfrac{d}{dz}^k \log \fR_{\rho,\fa}(P) \Big\rvert_{z=0} = B_\rho(k)(k-1)! \frac{\phi^k(\fa)}{\xi(\fa)^k\Omega_\infty^k} L(\bar{\phi}^k, \sigma_\fa, k)
\end{equation}
where $B_\rho(k) = \sum_{i=1}^r -n_i \left(N\lambda_i - \lambda_i^k\right)$.
\end{prop}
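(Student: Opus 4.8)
The plan is to compute the logarithmic derivative of $\fR_{\rho,\fa}$ by reducing everything to the logarithmic derivative of a single factor $R_{\lambda,\fa}$, expressed as a function of the complex variable $z$ via the Weierstrass parametrisation of $A^\fa(\BC)$. First I would recall the classical Laurent expansion at the origin of the Weierstrass $\wp$-function: $\wp(z,\CL_\fa) = z^{-2} + \sum_{k\geq 2,\ \mathrm{even}} (k+1)\, G_{k+2}(\CL_\fa)\, z^{k}$ where $G_m(\CL_\fa) = \sum_{0 \ne w \in \CL_\fa} w^{-m}$ is the Eisenstein series; note only even weights occur since $\CL_\fa$ is stable under $z \mapsto -z$. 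Because $R_{\lambda,\fa}(P) = c_\fa(\lambda)\prod_{M \in V_\lambda}(x(P)-x(M))^{-1}$ and $x(P) = \wp(z,\CL_\fa) - (a_{1,\fa}^2+4a_{2,\fa})/12$, the shift by the constant cancels between $x(P)$ and $x(M)$, so $x(P) - x(M) = \wp(z,\CL_\fa) - \wp(z_M,\CL_\fa)$ where $z_M$ is a point of $\lambda^{-1}\CL_\fa/\CL_\fa$ representing $M$. The standard product formula for the difference of two $\wp$-values in terms of the Weierstrass $\sigma$-function then gives
$$
\prod_{M \in V_\lambda}\bigl(\wp(z,\CL_\fa) - \wp(z_M,\CL_\fa)\bigr) = (\text{const}) \cdot \frac{\prod_{0 \ne u \in \lambda^{-1}\CL_\fa/\CL_\fa}\sigma(z-u,\CL_\fa)}{\sigma(z,\CL_\fa)^{N\lambda - 1}},
$$
and one checks that the constant, together with the normalising factor $c_\fa(\lambda)$ (the $12$-th root of $\Delta(\CL_\fa)^{N\lambda}/\Delta(\lambda^{-1}\CL_\fa)$, chosen precisely so that the Siegel-unit type expression is $H$-rational), conspire so that $R_{\lambda,\fa}$ equals, up to a nowhere-zero constant, the ratio of sigma-functions $\sigma(\lambda z,\lambda^{-1}\CL_\fa)^{-1}\prod\cdots$; in any case the constant dies under $d/dz \log$.

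Next I would differentiate the logarithm. Using $\frac{d}{dz}\log\sigma(z,\CL_\fa) = \zeta(z,\CL_\fa)$, the Weierstrass zeta-function, and its Laurent expansion $\zeta(z,\CL_\fa) = z^{-1} - \sum_{k\geq 2,\ \mathrm{even}} G_{k+2}(\CL_\fa)\, z^{k+1}$, one gets that $\frac{d}{dz}\log R_{\lambda,\fa}$ is a sum of shifted zeta-values. The pole terms at $z=0$ cancel because $\sum_{M} 1 = N\lambda - 1$ matches the exponent of $\sigma(z,\CL_\fa)$ in the denominator; this is exactly the role of the condition defining $V_\lambda$. What remains is a power series in $z$ whose coefficients are, after summing over the division points $u$, the Eisenstein-Kronecker–Lerch type sums $\sum_{0 \ne w \in \CL_\fa}\big(\text{some root of unity}\big) w^{-k}$ which one recognises as the special values of the Hecke $L$-series $L(\bar\phi^k,\sigma_\fa,k)$ after extracting the period $\Omega_\infty$ and the factor $\xi(\fa)$ via the relation $\CL_\fa = \xi(\fa)\Omega_\infty \fa^{-1}$; here the homothety $w \mapsto \lambda w$ on $\lambda^{-1}\CL_\fa$ versus $\CL_\fa$ produces the factor $N\lambda_i - \lambda_i^k$, and the action of $\sigma_\fa$ together with $\phi(\fa)$ produces $\phi^k(\fa)/(\xi(\fa)^k\Omega_\infty^k)$. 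Summing over $i$ with weights $n_i$ and reading off the $z^{k-1}$-coefficient yields the stated formula, and evaluating at $z=0$ after applying $(d/dz)^k$ picks up the factorial $(k-1)!$ from $\frac{d^k}{dz^k} z^{k-1}$ — wait, more precisely, one more derivative is applied; the bookkeeping is that the coefficient of $z^{k-1}$ times $(k-1)!$ gives the $k$-th derivative at $0$, which is \eqref{2.4} with $B_\rho(k) = \sum_i -n_i(N\lambda_i - \lambda_i^k)$.

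The main obstacle is the careful identification of the Eisenstein-series coefficients with the Hecke $L$-values $L(\bar\phi^k,\sigma_\fa,k)$: one must correctly track how $\sigma_\fa$ acts on the lattice $\CL$ (sending it to $\CL_\fa = \xi(\fa)\Omega_\infty\fa^{-1}$), how the Grössencharacter values $\phi(\fa)$ enter through the isogeny $\eta_A(\fa)$ and equation \eqref{2.1}, and how summation over $w \in \fa^{-1}$ reassembles into a sum over integral ideals in the residue class cut out by $\bar\phi$. A secondary subtlety is verifying that only even $k$ contribute — this follows from the symmetry $w \mapsto -w$ of $\CL_\fa$ combined with the fact that $\bar\phi^k(-1) = (-1)^k$, so odd terms vanish — and that the normalising constant $c_\fa(\lambda)$ is genuinely a constant in $z$ and hence invisible to $d/dz\log$, which is immediate once one recalls it depends only on the lattices and not on the point $P$. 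None of these steps is deep; the work is in the precise matching of normalisations, and the constraint $\sum_i n_i(N\lambda_i - 1) = 0$ defining $\CI$ is what guarantees the "constant term" ($k=0$ contribution, i.e. the $\Delta$-factors) cancels so that $\fR_{\rho,\fa}$ is a well-defined rational function with the clean logarithmic-derivative expansion above.
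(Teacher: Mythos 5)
Your overall route is the paper's route: write each $R_{\lambda_i,\fa}$, viewed through the Weierstrass parametrisation, as a quotient of (theta/sigma) functions, take $\frac{d}{dz}\log$, read off the Taylor coefficients as Eisenstein series of $\CL_\fa$ and $\lambda_i^{-1}\CL_\fa$ (whence the factor $N\lambda_i-\lambda_i^k$), and then identify $G_k(\CL_\fa)$ with $\frac{\phi^k(\fa)}{\xi(\fa)^k\Omega_\infty^k}L(\bar\phi^k,\sigma_\fa,k)$ via the lattice relation $\CL_\fa=\xi(\fa)\Omega_\infty\fa^{-1}$; that last identification is exactly Proposition 5.5 of Goldstein--Schappacher, which the paper cites rather than re-proves, and your factorial bookkeeping for \eqref{2.4} is correct.

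There is, however, one concrete gap: the weight-two term $k=2$, which is included in the statement (the sum starts at $k=2$) and is the case actually needed later. The classical Laurent expansions you invoke, $\wp(z,\CL_\fa)=z^{-2}+\sum_{k\ge 2}(k+1)G_{k+2}(\CL_\fa)z^{k}$ and $\zeta(z,\CL_\fa)=z^{-1}-\sum_{k\ge 4,\ \mathrm{even}}G_{k}(\CL_\fa)z^{k-1}$, contain no weight-two Eisenstein term at all, because $\sum_{w\ne 0}w^{-2}$ is not absolutely convergent; likewise the sums $\sum_{w}(u+w)^{-2}$ over translated lattices that arise from your shifted zeta-values are only conditionally defined. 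So, as written, your computation does not produce the coefficient of $z$ in the claimed form $-n_i(N\lambda_i-\lambda_i^{2})G_2(\CL_\fa)$: you need Hecke--Eisenstein regularisation, i.e.\ the value $G_2(L)=\lim_{s\to 0^+}\sum_{w\ne 0}w^{-2}|w|^{-2s}$, together with the identity $\sum_{0\ne u\in\lambda^{-1}\CL_\fa/\CL_\fa}\wp(u,\CL_\fa)=(\lambda^{2}-N\lambda)\,G_2(\CL_\fa)$ (or an equivalent device). This is precisely why the paper replaces $\sigma(z,L)$ by the non-holomorphic function $\theta(z,L)=\exp\!\left(-G_2(L)z^2/2\right)\sigma(z,L)$ and uses the identity \eqref{2.5}: the exponential factor inserts the regularised $G_2$ into the expansion of $\frac{d}{dz}\log\theta$, so that all even $k\ge 2$ are treated uniformly. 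Your argument is fine for $k\ge 4$ but must be supplemented by this regularisation step at $k=2$ (and, correspondingly, $L(\bar\phi^{2},\sigma_\fa,s)$ at $s=2$ sits on the boundary of absolute convergence and is defined by analytic continuation). A minor further correction: the coefficient sums that appear are not root-of-unity--twisted lattice sums $\sum_{w}(\text{root of unity})\,w^{-k}$ (that is the cyclotomic/Sinnott picture); summing over the $\lambda_i$-division points simply reassembles $G_k(\lambda_i^{-1}\CL_\fa)=\lambda_i^{k}G_k(\CL_\fa)$, with no twist. Also, the constraint $\sum_i n_i(N\lambda_i-1)=0$ plays no role in this proposition (the constant $c_\fa(\lambda_i)$ dies under $\frac{d}{dz}\log$, and the $k=1$ term vanishes since $G_1=0$); it is needed later for the measure-theoretic construction.
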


\begin{proof}
We recall the basic properties of Kronecker--Eisenstein series and elliptic functions, which are fully discussed in \cite{GS}. Let $z$ and $s$ be complex variables. For any lattice $L$ in $\BC$, we define the Kronecker--Eisenstein series by
$$
H_k(z,s,L) = \sum_{w\in L} \frac{(\bar{z}+\bar{w})^k}{|z+w|^{2s}}
$$
where the sum is taken over all $w\in L$, except $-z$ if $z\in L$. It defines a holomorphic function of $s$ in the half plane $\Re(s)>1+k/2$, and has an analytic continuation to the whole $s$-plane. In particular, for each $k \geq 3$, $G_k(L) = H_k(0,k,L)$ is a classic holomorphic Eisenstein series of weight $k$. For the convention, we will denote by
$$
G_1(L) = 0,~G_2(L) = \lim_{s \rightarrow 0+} \sum_{w \in L\setminus \{0\}} w^{-2} |w|^{-2s}.
$$
Let $\sigma(z,L)$ denote the Weierstrass $\sigma$-function. We define a non-holomorphic function $\theta(z,L)$ by
$$
\theta(z,L) = \exp\left(-G_2(L) \frac{z^2}{2}\right) \sigma(z,L).
$$
Then $\theta$ possesses a Taylor expansion of the logarithmic derivative of $\theta(z,L)$ as
$$
\frac{d}{dz} \log \theta(z, L) = \sum_{k=1}^\infty (-1)^{k-1} G_k(L) z^{k-1} = \sum_{k=2,~\text{even}}^\infty -G_k(L) z^{k-1},
$$
where the second equality follows from $G_k(L) = 0$ for $k$ odd.

Moreover, we have the identity
\begin{equation}\label{2.5}
\theta^2(z,L)^{N\lambda}/\theta^2(z,\lambda^{-1}L) = \prod_{0 \neq w \in \lambda^{-1}L/L} (\wp(z,L) - \wp(w,L))^{-1}
\end{equation}
for any non-zero element $\lambda$ of $\CO$. Hence, one gives another expression of the rational function $\fR_{\rho,\fa}(P) = \fR_{\rho,\fa}(\fM(z,\CL_\fa))$ as
$$
\fR_{\rho,\fa}(\fM(z,\CL_\fa))^2 = \prod_{i=1}^r \left(c_\fa(\lambda_i) \frac{\theta^2(z,\CL_\fa)^{N\lambda_i}}{\theta^2(z, \lambda_i^{-1}\CL_\fa)}\right)^{n_i}.
$$
It follows that
\begin{align}\label{2.6}
\begin{split}
\frac{d}{dz}\log\fR_{\rho,\fa}(\fM(z,\CL_\fa)) & = \sum_{i=1}^r n_i \left( N\lambda_i \frac{d}{dz}\log \theta(z, \CL_\fa) - \frac{d}{dz}\log \theta(z, \lambda_i^{-1}\CL_\fa)\right)\\
& = \sum_{i=1}^r \sum_{k=2,\,\text{even}}^\infty -n_i\left(N\lambda_i G_k(\CL_\fa) - \lambda_i^k G_k(\CL_\fa) \right) z^{k-1}.
\end{split}
\end{align}

Finally, Proposition 5.5 in \cite{GS} shows that the partial Hecke $L$-function $L(\bar{\phi}^k, \sigma_\fa, s)$ decomposes into Kronecker--Eisenstein series $H_k(z,s,\CL_\fa)$. In particular, we have
$$
G_k(\CL_\fa) = \frac{\phi^k(\fa)}{\xi(\fa)^k \Omega_\infty^k} L(\bar{\phi}^k, \sigma_\fa, k).
$$
Applying this equality to \eqref{2.6}, this completes the proof of the proposition.
\end{proof}

Now we define the rational function $\fI_{\rho,\fa}(P)$ on $A/H$ by
$$
\fI_{\rho,\fa}(P) = \fR_{\rho,\fa}(P)^2/\fR_{\rho,\fa\fp}(\eta_{A^\fa}(\fp)(P)).
$$
Clearly, it follows from \eqref{2.2} that
$$
\prod_{V \in A_\fp^\fa} \fI_{\rho,\fa}(P \oplus V) = 1.
$$

Let $v$ be the prime of $H$ lying above $\fp$. Let $\fm_v$ be the maximal ideal of the ring $\CO_v$ of integers of the completion $H_v$. For the elliptic curve $A^\fa/H$, we denote by $\wh{A^{\fa, v}}$ the formal group of $A^\fa$ at $v$. We denote by $t = -x/y$ the parameter of this formal group.

\begin{lem}\label{lem2.2}
Let $\fD_{\rho,\fa}(t)$ denote the $t$-expansion of the rational function $\fI_{\rho,\fa}(P)$. Then $\fD_{\rho,\fa}(t)$ lies in $1+\fm_v[[t]]$. In particular, we can define $m_{\rho,\fa}(t) = \frac{1}{2}\log(\fD_{\rho,\fa}(t))$, which lies in $\CO_v[[t]]$.
\end{lem}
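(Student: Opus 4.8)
The plan is to deduce the lemma from two facts about $A^\fa$ at $v$: that the $t$-expansion of $\fR_{\rho,\fa}$ along the formal group $\wh{A^{\fa,v}}$ is a unit of $\CO_v[[t]]$, and that the non-trivial point of $A^\fa_\fp$ lies in the kernel of reduction at $v$. Granting these, I will rewrite $\fI_{\rho,\fa}$ as a ratio of two translates of $\fR_{\rho,\fa}$ and read off the congruence modulo $\fm_v$ directly.

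For the first fact: since the minimal discriminant of $A$ is prime to $\fp$, the curve $A^\fa$ has good reduction at $v$ and the chosen Weierstrass equation is minimal there, so in the formal group $x(P)=t^{-2}(1-a_{1,\fa}t-\cdots)$, which is $t^{-2}$ times a unit of $\CO_v[[t]]$. The non-zero $\lambda$-division points of $A^\fa$ have order prime to $2$ (indeed $N\lambda$ is odd, as $(\lambda,6\fq)=1$), hence reduce injectively to non-zero points modulo $v$; thus the monic polynomial $\prod_{M\in V_\lambda}(X-x(M))\in H[X]$ has $v$-integral coefficients and non-zero reduction modulo $\fm_v$, and substituting the $t$-expansion of $x(P)$ shows that $\prod_{M\in V_\lambda}(x(P)-x(M))$ equals $t^{-(N\lambda-1)}$ times a unit of $\CO_v[[t]]$. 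Hence the $t$-expansion of $R_{\lambda,\fa}(P)$ equals $c_\fa(\lambda)\,t^{N\lambda-1}$ times a unit of $\CO_v[[t]]$, and it remains to see that $c_\fa(\lambda)\in\CO_v^\times$. By definition $c_\fa(\lambda)^{12}=\Delta(\CL_\fa)^{N\lambda}/\Delta(\lambda^{-1}\CL_\fa)$; here $\Delta(\CL_\fa)$ is, up to a unit of $\CO_H$, the minimal discriminant $-q^3$ of $A^\fa$, which is a unit at $v$, while $\lambda^{-1}\CL_\fa$ is, up to the factor $\xi(\fa)/\xi(\fa(\lambda))$, the period lattice of $A^{\fa(\lambda)}$, whose discriminant is again $-q^3$ up to a unit; and $\xi(\fa)/\xi(\fa(\lambda))$ is a unit at $v$ because $(\lambda)$ is prime to $\fp$, so that $\eta_{A^\fa}((\lambda))$ is \'{e}tale at $v$ and carries $\omega^{\fa(\lambda)}$ to a $v$-adic unit multiple of $\omega^\fa$. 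Thus $c_\fa(\lambda)^{12}$, and hence $c_\fa(\lambda)$, is a unit at $v$. Taking the product over $i$ and using $\sum_i n_i(N\lambda_i-1)=0$, the $t$-expansion $g(t)$ of $\fR_{\rho,\fa}(P)=\prod_i R_{\lambda_i,\fa}(P)^{n_i}$ lies in $\CO_v[[t]]^\times$.

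For the second fact and the conclusion: applying \eqref{2.2} with $\fb=\fp$ and raising to the $n_i$-th powers gives
$$
\fR_{\rho,\fa\fp}(\eta_{A^\fa}(\fp)(P))\;=\;\prod_{U\in A^\fa_\fp}\fR_{\rho,\fa}(P\oplus U)\;=\;\fR_{\rho,\fa}(P)\,\fR_{\rho,\fa}(P\oplus T),
$$
where $T$ is the non-trivial point of $A^\fa_\fp$, so that $\fI_{\rho,\fa}(P)=\fR_{\rho,\fa}(P)/\fR_{\rho,\fa}(P\oplus T)$. By the normalisation of $\fp$ fixed earlier in this section, $A$ has ordinary reduction at $v$ with $A_\fp$ inside the kernel of reduction, so $\tau:=t(T)\in\fm_v$. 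Writing $F$ for the formal group law of $\wh{A^{\fa,v}}$, the $t$-expansion of $\fR_{\rho,\fa}(P\oplus T)$ is $g(F(t,\tau))$, and since $F(t,\tau)\equiv F(t,0)=t\pmod{\fm_v\CO_v[[t]]}$ we get $g(F(t,\tau))\equiv g(t)\pmod{\fm_v\CO_v[[t]]}$; dividing by $g(t)\in\CO_v[[t]]^\times$ gives $\fD_{\rho,\fa}(t)=g(t)/g(F(t,\tau))\in 1+\fm_v\CO_v[[t]]=1+\fm_v[[t]]$. Finally, since $\fp$ has residue degree $1$ over $\BQ$ and $H/K$ is unramified, the field $H_v$ is unramified over $\BQ_2$ and $\fm_v=2\CO_v$; writing $\fD_{\rho,\fa}(t)=1+u(t)$ with $u(t)\in 2\CO_v[[t]]$, each term $(-1)^{k-1}u(t)^k/k$ of $\log(1+u(t))$ lies in $2^kk^{-1}\CO_v[[t]]\subseteq 2\CO_v[[t]]$, whence $\log\fD_{\rho,\fa}(t)\in 2\CO_v[[t]]$ and $m_{\rho,\fa}(t)=\tfrac12\log\fD_{\rho,\fa}(t)\in\CO_v[[t]]$.

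I expect the main obstacle to be the first fact, and within it the $v$-adic control of the constants $c_\fa(\lambda)$: this relies on identifying $\Delta(\CL_\fa)$ with the minimal discriminant of $A^\fa$ (via Gross's results) and on tracking the period ratios $\xi(\fa)/\xi(\fa(\lambda))$ through the \'{e}tale isogeny $\eta_{A^\fa}((\lambda))$. The reduction property of $T$ used in the second step is simply the statement that $\fp$ is the ordinary prime whose $\fp$-divisible group is connected, which is already built into the choices made at the start of this section, and so presents no real difficulty.
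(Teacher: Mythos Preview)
Your argument is correct but follows a genuinely different line from the paper's. The paper argues \emph{Frobenius-wise}: it quotes \cite[Lemma~23]{CW2} for the fact that each $D_{\lambda_i,\fa}(t)\in\CO_v[[t]]$ (which you essentially reprove in your ``first fact''), and then observes that $\wh{\eta_{A^\fa}(\fp)}(t)\equiv t^2\pmod{\fm_v}$ while $\sigma_\fp$ acts as Frobenius on the residue field; combining these gives
\[
D_{\lambda_i,\fa\fp}\bigl(\wh{\eta_{A^\fa}(\fp)}(t)\bigr)\equiv\sum_n d_n^2 t^{2n}\equiv D_{\lambda_i,\fa}(t)^2\pmod{\fm_v},
\]
and the ratio defining $\fI_{\rho,\fa}$ is then $\equiv 1$. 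You instead invoke the distribution relation \eqref{2.2} with $\fb=\fp$ to collapse $\fI_{\rho,\fa}$ to $\fR_{\rho,\fa}(P)/\fR_{\rho,\fa}(P\oplus T)$, and then use that the non-trivial $\fp$-torsion point $T$ lies in $\wh{A^{\fa,v}}(\fm_v)$, so that $F(t,\tau)\equiv t\pmod{\fm_v}$ kills the translation. Your route is more geometric and makes the role of the connected $\fp$-divisible group transparent; the paper's route makes the Frobenius-lift structure of $\eta_{A^\fa}(\fp)$ explicit, which is morally the same input in a different guise. A further small difference: you spell out why $\tfrac12\log(1+u)\in\CO_v[[t]]$ via the estimate $\ord_2(2^k/k)\ge 1$, whereas the paper leaves this ``in particular'' clause to the reader. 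Your verification that $c_\fa(\lambda)\in\CO_v^\times$ is a little sketchy (it boils down to the content of the cited \cite[Lemma~23]{CW2}), but the ingredients you name---identifying $\Delta(\CL_\fa)$ with the minimal discriminant and tracking $\xi$ through the \'etale isogeny $\eta_{A^\fa}((\lambda))$---are the right ones.
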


\begin{proof}
Let $D_{\lambda_i, \fa}(t) = \sum_{n\geq 0} d_n t^n$ denote the $t$-expansion of the rational function $R_{\lambda_i,\fa}(P)$. We use a classical result (see Lemma 23 of \cite{CW2}) that $D_{\lambda_i,\fa}(t)$ is a unit in $\CO_v[[t]]$. Writing
$$
\wh{\eta_{A^\fa}(\fp)}(t): \wh{A^{\fa,v}} \rightarrow \wh{A^{\fa\fp, v}}
$$
for the formal power series induced by the isogeny $\eta_{A^\fa}(\fp)$, we have $\wh{\eta_{A^\fa}(\fp)}(t) \equiv t^2 \mod \fm_v$. Since $(\lambda_i, \fp)=1$, it follows that
$$
D_{\lambda_i, \fa\fp}(\wh{\eta_{A^\fa}(\fp)}(t)) = \sum_{n\geq 0} d_n^{\sigma_\fp} (\wh{\eta_{A^\fa}(\fp)}(t))^n \equiv \sum_{n\geq 0} d_n^2 t^{2n} \mod \fm_v.
$$
Hence the lemma follows immediately, since
$$
D_{\lambda_i, \fa}(t)^2 = (\sum_{n \geq 0} d_n t^n )^2 \equiv \sum_{n\geq 0} d_n^2 t^{2n} \mod \fm_v.
$$
\end{proof}

Let $\CI_\fp$ denotes the ring of integers of the completion of the maximal unramified extension of $K_\fp$. As $\wh{A^v}$ has height 1 as a formal group, there exists an isomorphism over $\CI_\fp$
$$
\beta_v: \wh{\BG}_m \stackrel{\sim}{\longrightarrow} \wh{A^v},
$$
where $\wh{\BG}_m$ denotes the formal multiplicative group with parameter $w$. For each non-zero integral ideal $\fa$ of $K$ with $(\fa, \fp) = 1$, the isogeny $\eta_A(\fa): A \rightarrow A^\fa$ induces an isomorphism from $\wh{A^v}$ onto $\wh{A^{\fa, v}}$, and hence we have an isomorphism over $\CI_\fp$
$$
\beta_v^\fa : \wh{\BG}_m \stackrel{\sim}{\longrightarrow} \wh{A^{\fa,v}}, ~ \beta_v^\fa = \wh{\eta_A(\fa)}\circ \beta_v.
$$
The isomorphism $\beta_v^\fa$ is given by a power series $t = \beta_v^\fa(w)$ with coefficients in $\CI_\fp$. We write $\Omega_{\fa, v}$ for the coefficient of $w$ in this power series.

\begin{lem}\label{lem2.3}
We have $\Omega_{\fa, v} = \xi(\fa)\Omega_v$.
\end{lem}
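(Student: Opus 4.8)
The plan is a short leading‑coefficient computation based on the chain rule $\beta_v^\fa = \wh{\eta_A(\fa)}\circ\beta_v$ together with the defining relation \eqref{2.1} of $\xi(\fa)$. The key intermediate fact is that the formal power series $\wh{\eta_A(\fa)}(t)\colon\wh{A^v}\to\wh{A^{\fa,v}}$ has leading coefficient exactly $\xi(\fa)$. To see this, recall that in the parameter $t = -x/y$ of the formal group attached to the global minimal Weierstrass equation \eqref{1.3} the \Neron differential is normalized, $\omega = (1 + O(t))\,dt$, and likewise $\omega^\fa = (1 + O(t))\,dt$ on $\wh{A^{\fa,v}}$. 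Since $A$ has good reduction at every prime of $H$ not dividing $q$ (its minimal discriminant ideal being $(-q^3)$), both $A$ and $A^\fa$ have good reduction at $v$, so the identity $\eta_A(\fa)^\ast\omega^\fa = \xi(\fa)\omega$ of \eqref{2.1} may be read off on the formal groups at $v$; writing $\wh{\eta_A(\fa)}(t) = e\,t + O(t^2)$ and pulling back the normalized differentials, the linear terms give $e = \xi(\fa)$.

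Granting this, the lemma follows at once. Writing $\beta_v(w) = \Omega_v\,w + O(w^2)$, so that $\Omega_v$ is the coefficient of $w$ in $\beta_v$ (consistent with the $\fa=(1)$ case of the definition above, where $\xi((1))=1$), we get
\[
\beta_v^\fa(w) \;=\; \wh{\eta_A(\fa)}\bigl(\beta_v(w)\bigr) \;=\; \xi(\fa)\bigl(\Omega_v\,w + O(w^2)\bigr) + O\bigl((\Omega_v w)^2\bigr) \;=\; \xi(\fa)\,\Omega_v\,w + O(w^2),
\]
whence the coefficient of $w$ in $\beta_v^\fa$ is $\xi(\fa)\Omega_v$; that is, $\Omega_{\fa,v} = \xi(\fa)\Omega_v$.

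I expect no serious obstacle: the whole argument is leading‑term bookkeeping. The two points worth a sentence of justification are (i) that the \Neron differentials $\omega,\omega^\fa$ attached to the chosen global minimal equations are the normalized invariant differentials $(1+O(t))\,dt$ on the respective formal groups, which is the standard expansion coming from \eqref{1.3}; and (ii) that the global relation \eqref{2.1}, a priori an identity of differentials of $A^\fa/H$, may be pulled back along the formal‑group morphism $\wh{A^v}\to\wh{A^{\fa,v}}$ induced by $\eta_A(\fa)$ — legitimate precisely because $A$ and $A^\fa$ have good reduction at $v$, so that $\omega$, $\omega^\fa$ generate the invariant differentials of the \Neron models over $\CO_v$ and $\eta_A(\fa)$ reduces to an honest isogeny inducing $\wh{\eta_A(\fa)}$.
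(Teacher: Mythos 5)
Your proof is correct, but it runs along a genuinely different track from the paper's. The paper argues analytically: it writes down the exponential map $\CE(z,\CL)$ of $\wh{A^v}$ coming from the Weierstrass parametrization $\fM(z,\CL)$, invokes the uniqueness of the exponential of a formal group to get $\beta_v(e^{z/\Omega_v}-1)=\CE(z,\CL)$ and $\beta_v^\fa(e^{z/\Omega_{\fa,v}}-1)=\CE(z,\CL_\fa)$, and then uses the complex description $\eta_A(\fa)(\fM(z,\CL))=\fM(\xi(\fa)z,\CL_\fa)$ of the isogeny (the analytic incarnation of \eqref{2.1}) to compare first coefficients. You stay entirely on the formal--algebraic side: you read the linear coefficient of $\wh{\eta_A(\fa)}$ directly off the pullback identity \eqref{2.1}, using the normalization $\omega=(1+O(t))\,dt$ in the parameter $t=-x/y$ of the minimal model (and likewise for $\omega^\fa$), and then compose with $\beta_v$. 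Both arguments come down to the same leading-term fact, namely $\wh{\eta_A(\fa)}(t)=\xi(\fa)t+O(t^2)$; what your version buys is economy (no Weierstrass functions, no appeal to exponential-map uniqueness) and it makes explicit that \eqref{2.1} together with the normalization of the \Neron differentials already forces the answer, while the paper's version has the advantage of setting up exactly the analytic bookkeeping ($\CE$, $\fM$, $\CL_\fa=\xi(\fa)\Omega_\infty\fa^{-1}$) that is reused immediately in \lemref{lem2.4}. Two minor remarks: your appeal to good reduction at $v$ is harmless but slightly more than is needed for the comparison of linear terms, since identifying the $t$-expansion of $\eta_A(\fa)$ at the origin with $\wh{\eta_A(\fa)}$ and expanding $\omega=(1+O(t))\,dt$ are formal-completion statements over $H_v$, good reduction serving only to guarantee that the coefficients are integral (which is what the paper's framework over $\CI_\fp$ requires anyway); and your reading of $\Omega_v$ as the coefficient of $w$ in $\beta_v(w)$ is precisely the convention the paper uses implicitly, being what makes $\beta_v(e^{z/\Omega_v}-1)$ the normalized exponential in its proof.
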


\begin{proof}
Viewing $z$ as a parameter of the formal additive group $\wh{\BG}_a$, we have the exponential map $\CE(z, \CL)$ of $\wh{A^v}$ is given by the formal power series
$$
t = \CE(z, \CL) = - \frac{2\wp(z,\CL) - (a_1^2+4a_2)/12}{\wp'(z,\CL)-a_1\left(\wp(z,\CL)- (a_1^2+4a_2)/12\right)-a_3}
$$
Similarly, let $\CE(z,\CL_\fa)$ be defined analogously for the formal group $\wh{A^{\fa,v}}$ by using the Weierstrass isomorphism $\fM(z,\CL_\fa)$. By the uniqueness of the exponential map for a formal group, we have
$$
\beta_v(e^{z/\Omega_v}-1) = \CE(z,\CL),~\beta_v^\fa(e^{z/\Omega_{\fa,v}}-1) = \CE(z,\CL_\fa).
$$
On the other hand, as $\eta_A(\fa)(\fM(z,\CL)) = \fM(\xi(\fa)z,\CL_\fa)$, we have $\wh{\eta_A(\fa)}(\CE(z,\CL)) = \CE(\xi(\fa)z,\CL_\fa))$. The lemma then follows by comparing the first coefficients of the last equality on both sides.
\end{proof}

We now define the formal power series $\fB_{\rho,\fa}(w)$ in $\CI_\fp[[w]]$ by
$$
\fB_{\rho,\fa}(w) = m_{\rho,\fa}(\beta_v^\fa(w)),
$$
and let $\nu_{\rho, \fa}$ be the $\CI_\fp$-valued measure on $\BZ_2$ associated to $\fB_{\rho,\fa}(w)$. Indeed, let $\Lambda_{\CI_\fp}(\fG)$ denotes the ring of $\CI_\fp$-valued measures on a profinite group $\fG$. Then $\nu_{\rho,\fa}$ is determined by Mahler's theorem that there exists the ring isomorphism
\begin{equation}\label{2.7}
\CM : \Lambda_{\CI_\fp}(\BZ_2) \stackrel{\sim}{\longrightarrow} \CI_\fp[[w]], ~ \CM(\nu) = \sum_{n\geq 0} \left(\int_{\BZ_2} \binom xn d\nu\right)w^n = \int_{\BZ_2} (1+w)^x d\nu.
\end{equation}
Now we have the inclusion $i: \Lambda_{\CI_\fp}(\BZ_2^\times) \hookrightarrow \Lambda_{\CI_\fp}(\BZ_2)$ given by extending a measure on $\BZ_2^\times$ to $\BZ_2$ by zero. By \eqref{2.2} we have
$$
\sum_{\zeta \in \{\pm 1\}} \fB_{\rho,\fa}(\zeta(1+w)-1) = 0,
$$
whence the measure $\nu_{\rho,\fa}$ belongs to $\Lambda_{\CI_\fp}(\BZ_2^\times)$. Thus the measure $\nu_{\rho,\fa}$ can be viewed as an element of $\Lambda_{\CI_\fp}(\CG)$ via the isomorphism $\chi_\fp: \CG \stackrel{\sim}{\rightarrow} \BZ_2^\times$. For all $k \geq 0$, we have
$$
\int_\CG \chi_\fp^k d\nu_{\rho,\fa} = \int_{\BZ_2} x^k d\nu_{\rho,\fa} = D^k \fB_{\rho,\fa}(w) \Big\rvert_{w=0} = \sfrac{d}{dz}^k \fB_{\rho,\fa}(e^z-1)\Big\rvert_{z=0},
$$
where $D = (1+w)\frac{d}{dw}$. It is equal to
$$
\Omega_{\fa,v}^k \sfrac{d}{dz}^k \fB_{\rho,\fa}(e^{z/\Omega_{\fa, v}}-1)\Big\rvert_{z=0} =
\frac{1}{2} \Omega_{\fa,v}^k \sfrac{d}{dz}^k \log \fI_{\rho,\fa}(\fM(z,\CL_\fa))\Big\rvert_{z=0}.
$$

\begin{lem}\label{lem2.4}
For each even integer $k > 0$, we have
$$
\Omega_v^{-k} \int_\CG \chi_\fp^k d\nu_{\rho,\fa} = B_\rho(k)(k-1)! \phi^k(\fa)\Omega_\infty^{-k}  \left(L(\bar{\phi}^k, \sigma_\fa, k) - \frac{\phi^k(\fp)}{2}L(\bar{\phi}^k, \sigma_\fa \sigma_\fp, k)\right).
$$
\end{lem}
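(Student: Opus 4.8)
The plan is to reduce everything to \propref{prop2.1}. From the chain of equalities worked out immediately before the statement, combined with \lemref{lem2.3} (which gives $\Omega_{\fa,v}=\xi(\fa)\Omega_v$), one already has, for every even integer $k>0$,
\[
\Omega_v^{-k}\int_\CG \chi_\fp^k\,d\nu_{\rho,\fa}
=\frac12\,\xi(\fa)^k\,\sfrac{d}{dz}^k\log\fI_{\rho,\fa}(\fM(z,\CL_\fa))\Big\rvert_{z=0},
\]
so it suffices to evaluate the right-hand derivative. Since $\fI_{\rho,\fa}(P)=\fR_{\rho,\fa}(P)^2/\fR_{\rho,\fa\fp}(\eta_{A^\fa}(\fp)(P))$, its logarithmic derivative at $z=0$ is the difference $2\,\sfrac{d}{dz}^k\log\fR_{\rho,\fa}(\fM(z,\CL_\fa))|_{z=0}-\sfrac{d}{dz}^k\log\fR_{\rho,\fa\fp}(\eta_{A^\fa}(\fp)(\fM(z,\CL_\fa)))|_{z=0}$, and the first term is supplied outright by \eqref{2.4}.

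The work is in the second term. First I would pin down the analytic normalisation of the isogeny $\eta_{A^\fa}(\fp)$. Applying the automorphism $\sigma_\fa$ to the defining relation \eqref{2.1} for the ideal $\fp$ gives $\eta_{A^\fa}(\fp)^\ast(\omega^{\fa\fp})=\xi(\fp)^{\sigma_\fa}\omega^\fa$, and combining this with the factorisation $\eta_A(\fa\fp)=\eta_{A^\fa}(\fp)\circ\eta_A(\fa)$ (immediate from the definition of these isogenies via $\phi$ on $B$) and \eqref{2.1} once more yields the cocycle identity $\xi(\fa\fp)=\xi(\fa)\,\xi(\fp)^{\sigma_\fa}$. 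Translated to the Weierstrass parametrisations, this reads
\[
\eta_{A^\fa}(\fp)\bigl(\fM(z,\CL_\fa)\bigr)=\fM\left(\frac{\xi(\fa\fp)}{\xi(\fa)}\,z,\ \CL_{\fa\fp}\right).
\]
Hence the chain rule, followed by \propref{prop2.1} applied to the ideal $\fa\fp$ in place of $\fa$, gives
\[
\sfrac{d}{dz}^k\log\fR_{\rho,\fa\fp}\bigl(\eta_{A^\fa}(\fp)(\fM(z,\CL_\fa))\bigr)\Big\rvert_{z=0}
=\left(\frac{\xi(\fa\fp)}{\xi(\fa)}\right)^{k}B_\rho(k)(k-1)!\,\frac{\phi^k(\fa\fp)}{\xi(\fa\fp)^k\Omega_\infty^k}\,L(\bar{\phi}^k,\sigma_{\fa\fp},k).
\]
Invoking \propref{prop2.1} at $\fa\fp$ is legitimate, because its proof uses only that the ideal is prime to $\fq$, while the condition $(\lambda_i,6\fq)=1$ (which forces $(\lambda_i,\fp)=1$) is precisely what lets \eqref{2.2}, on which the definition of $\fI_{\rho,\fa}$ rests, be applied with $\fb=\fp$. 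The factors $\xi(\fa\fp)^{\pm k}$ now cancel, and using $\phi(\fa\fp)=\phi(\fa)\phi(\fp)$ and $\sigma_{\fa\fp}=\sigma_\fa\sigma_\fp$ the right-hand side becomes $B_\rho(k)(k-1)!\,\phi^k(\fa)\phi^k(\fp)\,\xi(\fa)^{-k}\Omega_\infty^{-k}L(\bar{\phi}^k,\sigma_\fa\sigma_\fp,k)$.

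Subtracting this from twice the first term, multiplying by $\tfrac12\xi(\fa)^k$, and cancelling the surviving powers of $\xi(\fa)$ yields exactly the formula in the statement. The only genuinely delicate step is the middle one: getting the analytic normalisation of $\eta_{A^\fa}(\fp)$ and the cocycle relation for $\xi$ exactly right, so that the spurious $\xi(\fa\fp)$ factors disappear, and checking carefully that \propref{prop2.1} remains valid for the ideal $\fa\fp$, which is not prime to $\fp$; everything after that is routine bookkeeping. One could sidestep applying \propref{prop2.1} at $\fa\fp$ by instead using \eqref{2.2} to rewrite $\fR_{\rho,\fa\fp}(\eta_{A^\fa}(\fp)(P))$ as the product $\prod_{U\in A^\fa_\fp}\fR_{\rho,\fa}(P\oplus U)$ and differentiating that directly, but the chain-rule route above is more transparent.
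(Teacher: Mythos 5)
Your argument is correct and follows essentially the same route as the paper's proof: split the $k$-th logarithmic derivative of $\fI_{\rho,\fa}$ into the two $\fR$-terms, use $\eta_{A^\fa}(\fp)(\fM(z,\CL_\fa))=\fM(\xi(\fp)^{\sigma_\fa}z,\CL_{\fa\fp})$ together with $\xi(\fa\fp)=\xi(\fa)\xi(\fp)^{\sigma_\fa}$, and conclude by \propref{prop2.1} (applied also at $\fa\fp$) and \lemref{lem2.3}. The only difference is that you spell out the derivation of these two identities and the applicability of \propref{prop2.1} to the ideal $\fa\fp$, which the paper simply asserts.
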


\begin{proof}
We have
\begin{multline*}
\Omega_{\fa,v}^{-k} \int_\CG \chi_\fp^k d\nu_{\rho,\fa} = \sfrac{d}{dz}^k \log \fR_{\rho, \fa}(\fM(z,\CL_\fa)) \Big\rvert_{z=0} \\ - \frac{1}{2}\sfrac{d}{dz}^k \log \fR_{\rho, \fa\fp}(\eta_{A^\fa}(\fp)(\fM(z,\CL_\fa))) \Big\rvert_{z=0}.
\end{multline*}
Note that $\eta_{A^\fa}(\fp)(\fM(z,\CL_\fa)) = \fM(\xi(\fp)^{\sigma_\fa}z, \CL_{\fa\fp})$ and $\xi(\fa\fp) = \xi(\fa)\xi(\fp)^{\sigma_\fa}$. Then the lemma follows from \propref{prop2.1} and \lemref{lem2.3}.
\end{proof}

We now denote by $\fC$ a set of integral ideals $\fa$ of $K$ prime to $\fp\fq$, whose Artin symbols give precisely the Galois group $G = \Gal(H/K)$. There is the relation
$$
L(\bar{\phi}^k\chi, s) = \sum_{\fa \in \fC} \chi(\sigma_\fa)L(\bar{\phi}^k, \sigma_\fa, s),~ \forall \chi \in G^\ast,
$$
where $G^\ast$ denotes the group of Dirichlet characters of $G$. Hence by \lemref{lem2.4}, for each $\chi \in G^\ast$ we have
$$
\Omega_v^{-k} \sum_{\fa \in \fC} \chi(\sigma_\fa) \phi^{-k}(\fa) \int_\CG \chi_\fp^k d\nu_{\rho,\fa} = B_\rho(k) (k-1)! \left(1- \frac{\phi^k\chi^{-1}(\fp)}{2}\right)\Omega_\infty^{-k}L(\bar{\phi}^k\chi, k).
$$

We can interpret the expression on the left hand side of this formula as follows. Write $\msg$ for the Galois group $\Gal(F_\infty/K)$. Let $B_H$ denote the base extension of $B$ to $H$, and let $\rho_\fp$ be the character of $\msg$ which coincides with the character $\chi_\fp$ on $\CG$ and describes the action of $\msg$ on $(B_H)_{\fp^\infty} = \prod_{\fa \in \fC} A_{\fp^\infty}^\fa$ in the following way. First, we identify $\msg$ with $\CG \times G$. Then for $\sigma_\fc \in G$ with $\fc \in \fC$ and $Q \in A_{\fp^n}^\fa$, we have
$$
\rho_\fp(\sigma_\fc)(Q) = \eta_{A^\fa}(\fc)(Q) \in A_{\fp^n}^{\fa\fc}.
$$
Hence, for $g = h \sigma_\fc \in \msg$ with $h \in \CG$, we have
$$
\rho_\fp(g) = \chi_\fp(h)\phi(\fc).
$$
As is shown in \S3 of \cite{BG}, we can fix a prime $\fP$ of $T$ lying above $\fp$ such that $T_\fP = K_\fp$. Since $\phi(\fa)$ is a unit at $\fP$, we note that the value of $\rho_\fp$ is in $K_\fp^\times$.

Now, let $\delta_\fa$ denote the Artin symbol of $\fa$ in $\msg$ so that $\{\delta_\fa \rvert_H\}_{\fa \in \fC} = \fC$. We define
$$
\nu_\chi^\circ = \sum_{\fa \in \fC}\chi(\sigma_\fa)\delta_\fa^{-1}\nu_{\rho,\fa} \in \CI_\fp[[\msg]] = \CI_\fp[G][[\CG]].
$$
Note that, by Lemma I.3.4 of \cite{dS}, it is independent of the choice of representatives of $G$ in $\msg$. It follows that
$$
\Omega_v^{-k} \sum_{\fa \in \fC} \chi(\sigma_\fa) \phi^{-k}(\fa) \int_\CG \chi_\fp^k d\nu_{\rho,\fa} = \Omega_v^{-k} \int_{\msg} \rho_\fp^k d\nu_\chi^\circ.
$$

\begin{thm}\label{thm2.5}
For each $\chi \in G^\ast$, there exists a unique $\CI_\fp$-valued pseudo-measure $\nu_\chi$ on $\msg=\Gal(F_\infty/K)$ such that for each even integer $k>0$, we have
$$
\Omega_v^{-k} \int_{\msg} \rho_\fp^k d\nu_\chi = (k-1)! \left(1- \frac{\phi^k\chi^{-1}(\fp)}{2}\right)\Omega_\infty^{-k} L(\bar{\phi}^k\chi, k).
$$
\end{thm}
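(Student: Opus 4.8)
The plan is to strip the nuisance factor $B_\rho(k)$ off the formula preceding the theorem by writing it, $\fp$-adically, as the integral of $\rho_\fp^k$ against an explicit element of the group ring of $\msg$, and then to divide. Write $\nu_\chi^\circ(\rho)$ for the measure denoted $\nu_\chi^\circ$ above, to record its dependence on $\rho\in\CI$. For a nonzero $\lambda\in\CO$ with $(\lambda,6\fq)=1$, let $\delta_{(\lambda)}\in\msg$ be the Artin symbol of the principal ideal $(\lambda)$ of $K$. By the main theorem of complex multiplication applied to $B_H$, exactly as in the description of $\rho_\fp$ preceding the theorem, $\rho_\fp(\delta_{(\lambda)})$ is the image of $\phi((\lambda))\in T_\fP^\times=K_\fp^\times$; since $\lambda$ is prime to $\fp$ this image is a unit, $\phi((\lambda))^h=\pm\lambda^h=(\pm\lambda)^h$, and the only $h$-th root of unity in $\BZ_2^\times$ is $1$ because $h$ is odd, so $\rho_\fp(\delta_{(\lambda)})=\pm\,\iota_\fp(\lambda)$ and hence $\rho_\fp(\delta_{(\lambda)})^{k}=\iota_\fp(\lambda)^{k}$ for every even $k$. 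Thus for $\rho=\{(\lambda_i,n_i)\}\in\CI$ I set
$$\theta_\rho=\sum_{i=1}^{r}n_i\bigl(\delta_{(\lambda_i)}-1\bigr)=\sum_{i=1}^{r}-n_i\bigl(N\lambda_i\cdot 1-\delta_{(\lambda_i)}\bigr)\in\CI_\fp[\msg],$$
the two expressions coinciding precisely because $\sum_i n_i(N\lambda_i-1)=0$; note that $\theta_\rho$ lies in the augmentation ideal, indeed in $I(\CG)\,\Lambda_{\CI_\fp}(\msg)$ because each $(\lambda_i)$ is principal, hence split in $H$, so $\delta_{(\lambda_i)}\in\CG$. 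Since integration of a character against a finite $\CI_\fp$-combination of group elements is just evaluation, $\int_\msg\rho_\fp^{k}\,d\theta_\rho=B_\rho(k)$ for all even $k>0$, and the formula before the theorem becomes
$$\Omega_v^{-k}\int_\msg\rho_\fp^{k}\,d\nu_\chi^\circ(\rho)=\Bigl(\int_\msg\rho_\fp^{k}\,d\theta_\rho\Bigr)(k-1)!\,\Bigl(1-\tfrac{\phi^{k}\chi^{-1}(\fp)}{2}\Bigr)\Omega_\infty^{-k}L(\bar\phi^{k}\chi,k).$$

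Next I would establish the density statement on which everything rests: an element of $\Lambda_{\CI_\fp}(\msg)$ whose integral against $\rho_\fp^{k}$ vanishes for all even $k>0$ is zero. Because $\fR_{\rho,\fa}$ is a rational function of $x$ alone, each measure $\nu_{\rho,\fa}$ is invariant under inversion on the formal group (hence annihilated by $\tau-1$, where $\tau\in\Delta$ is the nontrivial element) and has total mass $0$ (by Lemma~\ref{lem2.2}, $\fD_{\rho,\fa}(0)=1$); the same then holds for $\nu_\chi^\circ(\rho)$, so that all the relevant elements are governed, in each of the $h$ characters of $G$, by the regular local ring $\CI_\fp[[\Gamma]]\cong\CI_\fp[[T]]$ — here $[\msg:\CG]=h$ is odd and $\CI_\fp$ contains the $h$-th roots of unity, so $\CI_\fp[G]$ is a product of copies of $\CI_\fp$. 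On $\CI_\fp[[T]]$ the characters $\rho_\fp^{k}$ with $k$ even restrict to the even powers of a character of $\Gamma$ of infinite order, so the vanishing statement follows, component by component over $G^{\ast}$, from the fact that a nonzero element of $\CI_\fp[[T]]$ has only finitely many zeros. Granting this, for all $\rho,\rho'\in\CI$ one gets $\theta_{\rho'}\,\nu_\chi^\circ(\rho)=\theta_\rho\,\nu_\chi^\circ(\rho')$ in $\Lambda_{\CI_\fp}(\msg)$, since the two sides have the same $\rho_\fp^{k}$-integral for every even $k>0$.

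I would then fix one $\rho_0\in\CI$ for which $\theta_{\rho_0}$ is a non-zero-divisor in $\Lambda_{\CI_\fp}(\msg)$, and set $\nu_\chi=\nu_\chi^\circ(\rho_0)/\theta_{\rho_0}$ in the total quotient ring. The cross-relation gives $\theta_\rho\,\nu_\chi=\nu_\chi^\circ(\rho)\in\Lambda_{\CI_\fp}(\msg)$ for every $\rho\in\CI$, so $\nu_\chi$ is independent of the choice of $\rho_0$; dividing the displayed identity by $\int_\msg\rho_\fp^k\,d\theta_\rho=B_\rho(k)$ — choosing, for each even $k>0$, a $\rho$ with $B_\rho(k)\neq0$ — yields the interpolation formula of the theorem, and uniqueness of a pseudo-measure with that property follows from the density statement together with $\theta_{\rho_0}$ being a non-zero-divisor. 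Finally, since $\theta_\rho\,\nu_\chi\in\Lambda_{\CI_\fp}(\msg)$ for all $\rho$, the pseudo-measure property $(\gamma-1)\nu_\chi\in\Lambda_{\CI_\fp}(\msg)$ for $\gamma\in\msg$ reduces to showing that the $\theta_\rho$ ($\rho\in\CI$) generate the augmentation ideal $I(\CG)\,\Lambda_{\CI_\fp}(\msg)$ — equivalently, that $\nu_\chi$ has at worst a simple pole along the kernel of augmentation on the $\Gamma$-variable and no other pole, which is consistent with (indeed forced by) the simple pole of $\Gamma(k)=(k-1)!$ at $k=0$, the trivial character lying in the $2$-adic closure of the interpolation points. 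One clean way to arrange the non-zero-divisor and generation requirements at once is to choose $\rho_0$, using the density of the Artin symbols $\delta_{(\lambda)}$ in $\CG$, so that $\theta_{\rho_0}$ equals $\gamma_0-1$ up to a unit for a topological generator $\gamma_0$ of $\Gamma$; this is achieved by prescribing the residues of the $\lambda_i$ modulo a fixed power of $\fp$ (and of $\fp^\ast$) subject to $\sum_i n_i(N\lambda_i-1)=0$ and $r\geq 2$.

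I expect the genuinely delicate step to be this last circle of ideas — the density statement and the generation of the augmentation ideal — carried out at $p=2$: the order-$2$ group $\Delta\subset\CG\cong\BZ_2^\times$ and the fact that $2$ is the residue characteristic make the reduction to $\CI_\fp[[T]]$, and the verification that auxiliary elements $\lambda_i$ with the prescribed $\fp$-adic behaviour and norm congruences actually exist (the elliptic analogue of the classical Coates–Wiles lemma that the elements $\sigma_\fc-N\fc$ exhaust the augmentation ideal), require more care than in the odd split-prime case treated by Gillard and Schneps; everything else is formal manipulation of the measures $\nu_{\rho,\fa}$ already constructed.
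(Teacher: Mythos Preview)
Your approach is correct and is essentially the paper's: the paper deduces the theorem directly from its Lemma~2.6, which constructs exactly your auxiliary element $\theta_\rho=\tau_\lambda-\tau_{\bar\lambda}$ for a specific $\rho=\{(\lambda,1),(\bar\lambda,-1)\}$ with $\lambda\equiv 1\bmod\fp^3$ and $\bar\lambda\equiv 1+2^2\bmod\fp^3$, so that $\int_{\msg}\rho_\fp^k\,d\theta_\rho=B_\rho(k)$ and $\theta_\rho|_\Gamma$ generates the augmentation ideal of $\Lambda_{\CI_\fp}(\Gamma)$---precisely the ``delicate step'' you anticipate at the end. Your treatment is more explicit than the paper's on the density/uniqueness argument and the consistency over varying $\rho$, which the paper leaves implicit, but the underlying construction and the key $p=2$ computation are the same.
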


This theorem is immediately followed by the next lemma.

\begin{lem}\label{lem2.6}
There exists an $\CI_\fp$-valued measure $\theta_\rho$ on $\msg$ such that
$$
\int_{\msg} \rho_\fp^k d\theta_\rho = B_\rho(k)
$$
for all $k \geq 1$, and the restriction $\theta_\rho$ to $\Gamma = \Gal(F_\infty/F)$ generates the augmentation ideal of $\Lambda_{\CI_\fp}(\Gamma)$.
\end{lem}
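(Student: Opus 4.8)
Here is the plan.

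\smallskip
\noindent\textbf{Step 1: rewrite $B_\rho$ and build $\theta_\rho$ from point masses.} The defining relation $\sum_i n_i(N\lambda_i-1)=0$ gives $\sum_i n_iN\lambda_i=\sum_i n_i$, hence $B_\rho(k)=\sum_{i=1}^r n_i(\lambda_i^k-1)$ for all $k\ge 0$; I read this in $\CI_\fp$ via $\iota_\fp$, noting that $(\lambda_i,2)=1$ (because $2\mid 6$), so $\iota_\fp(\lambda_i)\in\CO_\fp^\times=\BZ_2^\times$. I then set $g_i=\chi_\fp^{-1}(\iota_\fp(\lambda_i))\in\CG\subseteq\msg$, using $\chi_\fp\colon\CG\xrightarrow{\sim}\BZ_2^\times$, and let $\theta_\rho=\sum_{i=1}^r n_i([g_i]-1)\in\Lambda_{\CI_\fp}(\msg)$ be the measure with $\int_\msg f\,d\theta_\rho=\sum_i n_i(f(g_i)-f(1))$. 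Since $\rho_\fp$ restricts to $\chi_\fp$ on $\CG$, one has $\rho_\fp(g_i)=\iota_\fp(\lambda_i)$, whence $\int_\msg\rho_\fp^k\,d\theta_\rho=\sum_i n_i(\iota_\fp(\lambda_i)^k-1)=B_\rho(k)$ for every $k\ge 0$; this settles the moment identity for \emph{any} $\rho\in\CI$.

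\smallskip
\noindent\textbf{Step 2: translate the condition on $\Gamma$.} Let $\pi\colon\msg\to\Gamma$ be the projection attached to the decompositions $\msg\cong\CG\times G$ and $\CG\cong\Gamma\times\Delta$; the restriction of $\theta_\rho$ to $\Gamma$ is $\pi_*\theta_\rho=\sum_i n_i([\pi(g_i)]-1)$. Fix a topological generator $\gamma_0$ of $\Gamma$ and write $\Lambda_{\CI_\fp}(\Gamma)=\CI_\fp[[T]]$ with $T=[\gamma_0]-1$, so its augmentation ideal is $(T)$. Because $F=H(A_{\fp^2})$ has degree $2$ over $H$, $\chi_\fp$ carries $\Gamma$ isomorphically onto $1+4\BZ_2$ and the projection $\CG\to\Gamma$ becomes $\BZ_2^\times\to 1+4\BZ_2$, $u\mapsto\langle u\rangle$; thus $\pi(g_i)=\gamma_0^{c_i}$ with $c_i=\log\iota_\fp(\lambda_i)/\log\chi_\fp(\gamma_0)$, $\log$ the Iwasawa $2$-adic logarithm ($\log(-1)=0$). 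Expanding $(1+T)^{c_i}-1$ gives $\pi_*\theta_\rho=(\sum_i n_ic_i)\,T+T^2(\cdots)$, which generates $(T)$ exactly when $\sum_i n_ic_i\in\CI_\fp^\times$, i.e.\ when $\ord_2(\log\mu)=\ord_2(\log\chi_\fp(\gamma_0))=2$, where $\mu:=\prod_i\iota_\fp(\lambda_i)^{n_i}\in\BZ_2^\times$. Since $\ord_2(\log u)=\ord_2(\langle u\rangle-1)$ for $u\in\BZ_2^\times$, this is the single congruence $\mu\not\equiv\pm1\pmod 8$.

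\smallskip
\noindent\textbf{Step 3: choose $\rho$.} The point is that $\mu\equiv\pm1\pmod 8$ whenever every $\lambda_i$ lies in $\BZ$: then $\lambda_i^2\equiv 1\pmod 8$, so $8\mid N\lambda_i-1$, and dividing the relation $\sum_i n_i(N\lambda_i-1)=0$ by $8$ and reducing mod $2$ forces it. So one must bring in a prime of $K$. By Chebotarev (and $H\cap\BQ(\zeta_8)=\BQ$, as $H/\BQ$ is Galois and unramified outside $q$) there are infinitely many rational primes $\ell\equiv 5\pmod 8$ splitting completely in $H$; fix one and a prime $\fl=(\lambda_2)$ of $K$ above it, necessarily principal because it splits completely in the Hilbert class field $H/K$, so that $(\lambda_2,6\fq)=1$ (as $\ell\notin\{2,3\}$ and $\ell\not\equiv q\pmod 8$), $\lambda_2\ne\pm1$, $N\lambda_2=\ell\equiv 5\pmod 8$. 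Take $\lambda_1=5$ (legitimate since $q\equiv 7\pmod 8$ forces $q\ne 5$), $n_1=(\ell-1)/4$, $n_2=-6$; then $24n_1+(\ell-1)n_2=0$, so $\rho=\{(\lambda_1,n_1),(\lambda_2,n_2)\}\in\CI$, and $n_1$ is odd while $n_2$ is even, so $\mu=5^{n_1}\iota_\fp(\lambda_2)^{n_2}\equiv 5\cdot 1\equiv 5\pmod 8$. With this $\rho$, the $\theta_\rho$ of Step 1 has both required properties.

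\smallskip
\noindent\textbf{Main obstacle.} The crux is Step 3: recognising that the restriction condition is not automatic — it fails for every choice with integral $\lambda_i$ — and then supplying the right Chebotarev input, a principal prime ideal of $K$ of norm $\equiv 5\pmod 8$, together with admissible exponents $n_i$. The two remaining computations — the moments and the $\CI_\fp[[T]]$-expansion of $\pi_*\theta_\rho$ — are routine.
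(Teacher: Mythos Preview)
Your proof is correct, but it takes a genuinely different route from the paper's. The paper chooses a single $\lambda\in\CO$ with $(\lambda,6\fq)=1$ satisfying the CRT congruences $\lambda\equiv 1\pmod{\fp^3}$ and $\bar\lambda\equiv 5\pmod{\fp^3}$, sets $\rho=\{(\lambda,1),(\bar\lambda,-1)\}$ (so $B_\rho(k)=\lambda^k-\bar\lambda^k$), and defines $\theta_\rho=\tau_\lambda-\tau_{\bar\lambda}$ as a difference of Artin symbols in $\msg$; the congruences force $\tau_\lambda|_\Gamma=\gamma^a$, $\tau_{\bar\lambda}|_\Gamma=\gamma^b$ with $a$ even and $b$ odd, so $\theta_\rho|_\Gamma=\gamma^a(1-\gamma^{b-a})$ generates the augmentation ideal directly. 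Your construction is more synthetic --- you pull $\iota_\fp(\lambda_i)$ back through $\chi_\fp^{-1}$ rather than invoking Artin symbols --- and your $\rho$ is assembled via Chebotarev from $\lambda_1=5$ and a principal degree-one prime of norm $\ell\equiv 5\pmod 8$, with coefficients $n_1=(\ell-1)/4$ and $n_2=-6$. Both reductions of the augmentation condition (yours to $\mu\not\equiv\pm1\pmod 8$, the paper's to $b-a$ odd) are essentially the same computation. What the paper's choice buys is that $n_i=\pm 1$: this is used explicitly later in Lemma~3.5, where the residues $n_i\Omega_v$ at the relevant poles must be $\fp$-adic units. Your $n_2=-6$ is even, so your specific $\rho$ would not plug straight into that later argument without modification (though $n_1$ being odd means the poles coming from $\lambda_1$ still survive).
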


\begin{proof}
This lemma is essentially the same with Lemma II. 7 of \cite{BGS}. We can choose an element $\lambda$ in $\CO$ satisfying $(\lambda, 6\fq) = 1$ and
\begin{equation}\label{2.8}
\lambda \equiv 1 \mod \fp^3,~\bar{\lambda} \equiv 1+ 2^2 \mod \fp^3.
\end{equation}
We set $\rho = \{ (\lambda, 1), (\bar{\lambda}, -1) \} \in \CI$ and write $\tau_\lambda$ and $\tau_{\bar{\lambda}}$ for the Artin symbols of the integral ideals $(\lambda)$ and $(\bar{\lambda})$ of $K$ in $\msg$, respectively. Since $B_\rho(k) = \lambda^k - \bar{\lambda}^k$, the measure
$$
\theta_\rho = \tau_\lambda - \tau_{\bar{\lambda}}
$$
satisfies the first condition of the lemma. For the second condition, we fix a topological generator $\gamma$ of $\Gamma$, and write $\tau_\lambda \rvert_\Gamma = \gamma^a$ and $\tau_{\bar{\lambda}} \rvert_\Gamma = \gamma^b$ with $a,b \in \BZ_2$. The congruences \eqref{2.8} imply that $a \in 2\BZ_2$ and $b \not\in 2\BZ_2$. Hence we have
$$
\tau_\lambda \rvert_\Gamma - \tau_{\bar{\lambda}} \rvert_\Gamma = \gamma^a(1-\gamma^{b-a})
$$
where $\gamma^a$ is a unit in $\Lambda_{\CI_\fp}(\Gamma)$, and $(1-\gamma^{b-a}) = (1-\gamma)u$ with $u$ a unit in $\Lambda_{\CI_\fp}(\Gamma)$.
\end{proof}

%%%%%%
\medskip
\section{Vanishing of the $\mu$-invariant for the $\fp$-adic $L$-function}

We have constructed the $\fp$-adic $L$-function $\nu_\chi$ in \thmref{thm2.5} for each $\chi \in G^\ast$. Since we deal with the Iwasawa module $X(H_\infty)$, not $X(F_\infty)$, we define a related pseudo-measure on $\Gal(H_\infty/K)$ by using the following lemma.

\begin{lem}\label{lem3.1}
Let $\delta$ be the generator of $\Delta = \Gal(F_\infty/H_\infty)$. We have $(1+\delta)\Lambda_{\CI_\fp}(\msg) = (1+\delta)\Lambda_{\CI_\fp}(\Gal(H_\infty/K))$.
\end{lem}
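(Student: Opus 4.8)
The plan is to reduce the assertion to a direct--sum decomposition of $\Lambda_{\CI_\fp}(\msg)$ over $\Lambda_{\CI_\fp}(\Gal(H_\infty/K))$, after which the equality follows from a one-line identity in the group ring of $\Delta$.

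First I would assemble the group theory. Recall from \S2 that $\msg = \CG \times G$ and $\CG = \Gamma \times \Delta$, where $\Gamma = \Gal(F_\infty/F) \cong \BZ_2$, $G = \Gal(H/K)$ has odd order $h$, and $\Delta = \Gal(F_\infty/H_\infty)$ is cyclic of order $2$ generated by $\delta$ (the identification $\Delta = \Gal(F_\infty/H_\infty)$ being forced because $\BZ_2 \times \BZ/2$ has a unique subgroup of order $2$). Since $h$ is odd, $\Gamma$ is the pro-$2$ part of $\Gal(H_\infty/K)$, and hence $\Gal(H_\infty/K) \cong \Gamma \times G$; combining this with the above yields a natural identification $\msg \cong \Delta \times \Gal(H_\infty/K)$. (Alternatively this can be read off from $H_\infty \cap F = H$ and $F_\infty = F H_\infty$, both recorded in \S2.) Consequently $\Lambda_{\CI_\fp}(\msg) = \Lambda_{\CI_\fp}(\Gal(H_\infty/K))[\Delta]$ is free of rank $2$ as a module over the subring $\Lambda_{\CI_\fp}(\Gal(H_\infty/K))$, with basis $\{1,\delta\}$; in particular both sides of the claimed equality are honest ideals of $\Lambda_{\CI_\fp}(\msg)$, so the statement makes sense as written.

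Next I would do the computation. One inclusion is trivial: $\Lambda_{\CI_\fp}(\Gal(H_\infty/K)) \subseteq \Lambda_{\CI_\fp}(\msg)$, so $(1+\delta)\Lambda_{\CI_\fp}(\Gal(H_\infty/K)) \subseteq (1+\delta)\Lambda_{\CI_\fp}(\msg)$. For the reverse, write a general element of $\Lambda_{\CI_\fp}(\msg)$ as $a+b\delta$ with $a,b \in \Lambda_{\CI_\fp}(\Gal(H_\infty/K))$; using $\delta^2=1$ and the fact that $\delta$ is central in $\msg$, one gets $(1+\delta)(a+b\delta) = (a+b)(1+\delta) \in (1+\delta)\Lambda_{\CI_\fp}(\Gal(H_\infty/K))$. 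This gives the opposite inclusion, and hence the equality.

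The main (and essentially only) point needing care is the identification $\msg \cong \Delta \times \Gal(H_\infty/K)$ --- equivalently, that the order-$2$ subgroup $\Delta = \Gal(F_\infty/H_\infty)$ splits off as a direct factor of $\msg$. Everything else is formal, and this step is itself immediate from the structural facts already in hand ($\msg = \CG \times G$, $\CG = \Gamma \times \Delta$, $h$ odd), so I do not anticipate any real obstacle.
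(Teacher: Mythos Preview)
Your proof is correct and follows essentially the same approach as the paper: both rest on the identification $\Lambda_{\CI_\fp}(\msg) = \Lambda_{\CI_\fp}(\Gal(H_\infty/K))[\Delta]$ (equivalently $\CI_\fp[\Delta][[\Gal(H_\infty/K)]]$) and the one-line computation $(1+\delta)(a+b\delta) = (1+\delta)(a+b)$. The only difference is that you spell out the group-theoretic justification for the splitting $\msg \cong \Delta \times \Gal(H_\infty/K)$, which the paper simply asserts.
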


\begin{proof}
Since $\Lambda_{\CI_\fp}(\msg) = \CI_\fp[\Delta][[\Gal(H_\infty/K)]]$, it suffices to prove that $(1+\delta)\CI_\fp[\Delta] = (1+\delta)\CI_\fp$. Indeed, if $a+b\delta \in \CI_\fp[\Delta]$, then $(1+\delta)(a+b\delta) = (1+\delta)(a+b) \in (1+\delta) \CI_\fp$.
\end{proof}

Hence there exists an $\CI_\fp$-valued pseudo-measure $m_\chi$ on $\Gal(H_\infty/K)$ such that
\begin{equation}\label{3.1}
(1+\delta)\nu_\chi = (1+\delta)m_\chi.
\end{equation}
We define the $\fp$-adic $L$-function of $\chi$ by
$$
L_\fp(s,\chi) = \int_{\Gal(H_\infty/K)} \kappa^s dm_\chi, ~ s\in \BZ_2,
$$
where $\kappa$ is the natural isomorphism of $\Gal(K_\infty/K)$ onto $1+2^2\BZ_2$ with $\gamma \mapsto u$, and we view such a function on $\Gal(H_\infty/K)$ via the natural surjection from $\Gal(H_\infty/K)$ to $\Gal(K_\infty/K)$. It is well-known that this function is an Iwasawa function, i.e. there exists a formal power series $G_\fp(\chi; w) \in \CI_\fp[[w]]$ such that
\begin{equation}\label{3.2}
G_\fp(\chi; u^s-1)/(u^s-1)^e = L_\fp(s,\chi)
\end{equation}
where $e = 0$ or $1$, according as $\chi \neq 1$ or $\chi =1$. The aim of this section is to prove the vanishing of the $\mu$-invariant of $m_\chi$, or equivalently,

\begin{thm}\label{thm3.2}
For each $\chi \in G^\ast$, the formal power series $G_\fp(\chi; w)$ is prime to $2$, i.e. the $\mu$-invariant of $G_\fp(\chi;w)$ vanishes.
\end{thm}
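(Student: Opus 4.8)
The plan is to carry out the elliptic analogue of Sinnott's proof of the vanishing of the cyclotomic $\mu$-invariant, in three stages: (1) reduce \thmref{thm3.2} to a non-vanishing statement modulo $2$ for the measure $\nu_\chi^\circ$; (2) identify the mod-$2$ reduction of the relevant push-forward of $\nu_\chi^\circ$ with the formal expansion at the origin of a genuine rational function on the ordinary reduction $\wt{A}$ of $A$ at $v$; and (3) prove that this rational function is nonzero. For (1), comparing the interpolation formulas of \thmref{thm2.5} and \lemref{lem2.6}, and using that the $\rho_\fp^k$ are characters so that $k$-th moments are multiplicative, gives $\nu_\chi^\circ=\theta_\rho\cdot\nu_\chi$ in the total ring of fractions of $\Lambda_{\CI_\fp}(\msg)$. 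The push-forward of $\theta_\rho$ to $\Gamma=\Gal(K_\infty/K)$ is, by the computation in the proof of \lemref{lem2.6}, a unit times $1-\gamma$, hence prime to $2$; since push-forward of measures along a quotient map is a ring homomorphism, the $\mu$-invariant of the push-forward of $\nu_\chi^\circ$ to $\Gamma$ equals that of the push-forward of $\nu_\chi$. Running the same bookkeeping through \lemref{lem3.1}, \eqref{3.1} and \eqref{3.2} — the multiplication by $1+\delta$, which pushes to $2$, and the factor $(u^s-1)^e$, which is prime to $2$ — one sees that $\mu(G_\fp(\chi;w))=0$ for every $\chi\in G^\ast$ if and only if the push-forward of $\nu_\chi^\circ$ under the natural surjection $\Lambda_{\CI_\fp}(\msg)\twoheadrightarrow\Lambda_{\CI_\fp}(\Gamma)$ is nonzero modulo $2$ for every $\chi\in G^\ast$.

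For stage (2), the key simplification is that \eqref{2.2} with $\fb=\fp$ collapses the definition of $\fI_{\rho,\fa}$ to $\fI_{\rho,\fa}(P)=\fR_{\rho,\fa}(P)/\fR_{\rho,\fa}(P\oplus W_\fa)$, where $W_\fa$ is the nontrivial point of $A^\fa_\fp$. Since $A^\fa$ has good reduction at $v\mid\fp$ and $\eta_{A^\fa}(\fp)$ reduces to the Frobenius in characteristic $2$, the point $W_\fa$ lies in the kernel of reduction, so $x(P\oplus W_\fa)-x(P)=2\,\epsilon_\fa(P)$ with $\epsilon_\fa$ a rational function over $\CO_v$ whose only pole is at the origin. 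Expanding the product defining $\fR_{\rho,\fa}$ then shows that $m_{\rho,\fa}(t)=\tfrac12\log\fD_{\rho,\fa}(t)$ reduces modulo $\fm_v$ to the $t$-expansion of the rational function
$$
\wt\epsilon_\fa\cdot\sum_{i}n_i\sum_{M\in V_{\lambda_i}}\frac{1}{x-x(M)}
$$
on the ordinary elliptic curve $\wt{A^\fa}$ over $\ov{\BF}_2$; the relation $\sum_i n_i(N\lambda_i-1)=0$ defining $\CI$ makes this combination regular at the origin, and its only poles are simple poles at the nonzero $\lambda_i$-division points, with residues that are nonzero once the $n_i$ are odd (and away from the finitely many zeros of $\wt\epsilon_\fa$). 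Transporting everything to the fixed formal group $\wh{A^v}$ via the isogenies $\wh{\eta_A(\fa)}$ and the parametrization $\beta_v$, and feeding this through the Amice transform, the push-forward of $\nu_\chi^\circ$ to $\Gamma$ reduces modulo $2$ to the expansion at the origin of the single $\ov{\BF}_2$-rational function
$$
\Phi_\chi \;=\; \sum_{\fa\in\fC}\chi(\sigma_\fa)\,\eta_A(\fa)^\ast\!\left(\wt\epsilon_\fa\cdot\sum_{i}n_i\sum_{M\in V_{\lambda_i}}\frac{1}{x-x(M)}\right)
$$
on $\wt A$, whose poles lie at torsion points of $\wt A$ of order prime to $2$. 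Because a nonzero rational function has nonzero expansion at the origin, the theorem reduces to showing $\Phi_\chi\neq0$.

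For stage (3), the functions $(x-x(M))^{-1}$ attached to distinct pairs $\{M,-M\}$ of torsion points are linearly independent over $\ov{\BF}_2$ (their pole divisors are distinct), so $\Phi_\chi=0$ would force, at each point $N$ of its polar support, the vanishing of a weighted character sum $\sum_{\fa\in\fC}\chi(\sigma_\fa)\,c_\fa(N)$ with explicit, generically nonzero local coefficients $c_\fa(N)$. The plan is then to choose $\rho=\{(\lambda_i,n_i)\}\in\CI$ — taking the $\lambda_i$ to be prime elements of $\CO$ whose norms are large and coprime to every $N\fc$, $\fc\in\fC$, and the $n_i$ odd — so that, as $N$ ranges over a suitable family of $\lambda_1$-division points of $\wt A$, the matrix $(c_\fa(N))_{N,\fa}$ has rank $|G|$ over $\ov{\BF}_2$; then no nontrivial character sum over $G$ can vanish identically in $N$, and hence $\Phi_\chi\neq0$. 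Establishing this non-degeneracy — i.e. ruling out an accidental cancellation modulo $2$ among the $h$ Galois twists — is the heart of the argument and the exact elliptic counterpart of Sinnott's non-degeneracy lemma; it, together with the characteristic-$2$ bookkeeping of stage (2) (the factor $\tfrac12$, the passage through Frobenius, the collapsing of the $\fp$-division values into squares), is the step I expect to be the main obstacle.
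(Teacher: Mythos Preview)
Your three-stage outline has the right overall shape, but Stage~(2) contains a genuine gap that is precisely the heart of Sinnott's method, and you have glossed over it. The push-forward of $\nu_\chi^\circ$ from $\msg$ to $\Gamma$ is, on each summand $\nu_{\rho,\fa}$, the $\Gamma$-transform $\msl$ of \eqref{3.3}: it takes a power series in the Amice variable $w$ on $\wh{\BG}_m$ (equivalently, in the formal parameter $t$ on $\wh{A^v}$) and produces a power series in the Iwasawa variable $T=\gamma-1$ on $\Gamma$. These live in \emph{different} power-series rings, and the passage between them is not tautological. Your sentence ``feeding this through the Amice transform, the push-forward \ldots reduces modulo $2$ to the expansion at the origin of $\Phi_\chi$'' silently identifies the mod-$2$ reduction of $\msl(F)(T)$ with the mod-$2$ reduction of $F(w)$ itself. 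That identification is exactly what has to be \emph{proved}; it is the content of the elliptic analogue of Sinnott's $\Gamma$-transform lemma (Lemma~\ref{lem3.3} in the paper), which shows $\mu(\msl(F))=\mu(\wt F+\wt F\circ(-1))$ when $F$ comes from a rational function on $A$. Without this step, you have only shown that the measure $\nu_\chi^\circ$ itself is nonzero mod $2$, which does not rule out the push-forward vanishing mod $2$.

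Two further points of comparison with the paper's argument. First, the paper works with the \emph{derivative} $D\fB_{\rho,\fa}$, not with $\fB_{\rho,\fa}$: the function $D\fB_{\rho,\fa}$ is the $t$-expansion of a genuine rational function on $A$ in characteristic zero (being $\tfrac{d}{dz}\log$ of a rational function), so Lemma~\ref{lem3.3} applies directly. Your $m_{\rho,\fa}$ is only rational \emph{mod $2$}, so even after supplying the missing $\Gamma$-transform step you would need a version of the Sinnott lemma for power series that are rational only modulo $2$ --- not insurmountable, but extra work. Second, your Stage~(3) is over-engineered: once one analyzes the poles of $\sum_{\fa\in\fC}\chi(\sigma_\fa)D\fB_{\rho,\fa}$ on $\wt A$ as in Lemma~\ref{lem3.5}, the pole sets for different $\fa$ are disjoint mod $v$ (reduction is injective on the relevant prime-to-$2$ torsion), and since each $\chi(\sigma_\fa)$ is an $h$-th root of unity and hence a $\fp$-adic unit, the residues cannot cancel. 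No rank-$|G|$ matrix argument is needed.
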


We remark that this vanishing theorem has recently been proven in \cite{OV}, but in the present paper we will clarify it for our situation. We will use the idea of Sinnott \cite{Si} and Schneps \cite{Sch}. Let $\omega$ be the Teichm\"uller character on $\BZ_2^\times$, and for each $x \in \BZ_2^\times$, let $\pair{x} = x/\omega(x)$. Given a formal power series $F(w) \in \CI_\fp[[w]]$, we associate it to a measure $m_F$ via Mahler's theorem \eqref{2.7}. Then there exists a formal power series $\msl(F)(w) \in \CI_\fp[[w]]$ such that
\begin{equation}\label{3.3}
\int_{\BZ_2^\times} \pair{x}^s dm_F(x) = \msl(F)(u^s-1), ~ s\in \BZ_2.
\end{equation}
The $\mu$-invariant of a formal power series $F(w)$ and that of $m_F$ are both denoted by $\mu(F)$. Recall that $\beta_v: \wh{\BG}_m \stackrel{\sim}{\rightarrow} \wh{A^v}$ is the isomorphism of formal groups. Recall also that $\CO_v$ is the ring of integers of $H_v$.

\begin{lem}[Elliptic analogue of Theorem 1 of \cite{Si}]\label{lem3.3}
Let $F(w) \in \CI_\fp[[w]]$ be a formal power series of the form $F(w) = f(\beta_v(w))$, where $f$ is a rational function on $A$ with coefficients in $\CO_v$. Then we have
$$
\mu(\msl(F)) = \mu(\wt{F} + \wt{F} \circ (-1)),
$$
where
$$
\wt{F}(w) = F(w) - \frac{1}{2}\sum_{\zeta \in \wpair{\pm1}}F(\zeta(1+w)-1),~ (F \circ (-1))(w) = F((1+w)^{-1}-1).
$$
\end{lem}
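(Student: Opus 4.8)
The plan is to translate every object in the statement into the language of $\CI_\fp$-valued measures on $\BZ_2$ and then, following Sinnott \cite{Si}, to track the $\mu$-invariant through a short chain of coordinate changes. I will use throughout the standard fact that, for a measure $\nu$ on $\BZ_2$, one has $\mu(\CM(\nu)) \geq k$ if and only if $\nu(B) \in 2^k\CI_\fp$ for every ball $B = a + 2^n\BZ_2$; equivalently, $\CM$ is an isomorphism of $\CI_\fp$-modules carrying $2^k\Lambda_{\CI_\fp}(\BZ_2)$ onto $2^k\CI_\fp[[w]]$ for every $k \geq 0$. I will also use repeatedly that the $\mu$-invariant of a measure is unchanged under push-forward along multiplication by a unit of $\BZ_2$, and along an isomorphism of topological groups, since such maps carry balls to balls.

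First I would re-express the right-hand side. Starting from \eqref{2.7}, and using $\mathbf{1}_{\BZ_2^\times}(x) = \tfrac12(1 - (-1)^x)$ together with $(-1)^x(1+w)^x = (-(1+w))^x$, a short computation gives $\wt{F} = \CM(m_F|_{\BZ_2^\times})$; that is, $\wt{F}$ is precisely the transform of the restriction of $m_F$ to the units. In the same way $\wt{F}\circ(-1) = \CM([-1]_*(m_F|_{\BZ_2^\times}))$, where $[-1]_*$ denotes push-forward along $x \mapsto -x$. Hence $\wt{F} + \wt{F}\circ(-1) = \CM(\nu^+)$, where $\nu^+ := (1 + [-1]_*)(m_F|_{\BZ_2^\times})$ is a measure supported on $\BZ_2^\times$ and invariant under $x \mapsto -x$.

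Next I would re-express the left-hand side, which is where the one genuinely $2$-adic feature enters. Writing $\omega$ for the mod-$4$ character, the twisting map $x \mapsto \pair{x} = x/\omega(x)$ is nothing but the projection $\pi$ of $\BZ_2^\times = \wpair{\pm1}\times(1+4\BZ_2)$ onto its second factor $U := 1+4\BZ_2$ (it is the identity on $U$ and kills $\wpair{\pm1}$). So, writing $\ell\colon U \xrightarrow{\sim} \BZ_2$ for the isomorphism $u \mapsto 1$ underlying $\kappa$, unwinding \eqref{3.3} gives $\msl(F) = \CM(\ell_*\pi_*(m_F|_{\BZ_2^\times}))$. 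Decomposing $m_F|_{\BZ_2^\times} = \nu_1 + \nu_{-1}$ according to the two cosets $1 + 4\BZ_2$ and $-1 + 4\BZ_2$ of $U$ in $\BZ_2^\times$, and using that $\pair{x} = -x$ on $-1 + 4\BZ_2$, I would check that $\pi_*(m_F|_{\BZ_2^\times}) = \nu_1 + [-1]_*\nu_{-1}$, and this is exactly the restriction $\nu^+|_U$ of the measure $\nu^+$ of the previous paragraph. Thus $\msl(F) = \CM(\ell_*(\nu^+|_U))$.

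The conclusion is then a bookkeeping of $\mu$-invariants: since $\nu^+$ is supported on $U \sqcup (-U)$, one has $\mu(\nu^+) = \min(\mu(\nu^+|_U), \mu(\nu^+|_{-U}))$, and because $[-1]_*$ interchanges $U$ and $-U$, commutes with $\nu^+$, and preserves $\mu$, the two terms agree, so $\mu(\nu^+) = \mu(\nu^+|_U)$; and since $\ell$ is an isomorphism of topological groups, $\mu(\ell_*(\nu^+|_U)) = \mu(\nu^+|_U)$. Combining, $\mu(\msl(F)) = \mu(\nu^+|_U) = \mu(\nu^+) = \mu(\wt{F} + \wt{F}\circ(-1))$. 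I expect the only real subtlety to be the identification, in the third paragraph, of the twisting operator $\pair{\cdot}$ with the projection $\BZ_2^\times \twoheadrightarrow 1 + 4\BZ_2$: because $\pair{\cdot}$ already kills $-1$, the naive restriction-to-units series $\wt{F}$ is not by itself the object whose $\mu$-invariant matches that of $\msl(F)$, and one is forced to symmetrize to $\wt{F} + \wt{F}\circ(-1)$ — this is the $2$-adic shadow of the fact that in Sinnott's odd-$p$ argument the corresponding operator is already an isomorphism onto $1 + p\BZ_p$. (The rationality of $f$ plays no role in the identity itself; it is recorded here because Lemma~\ref{lem3.3} will be applied only to such $F$, and that hypothesis becomes essential at the next step, where it is used to show $\mu(\wt{F} + \wt{F}\circ(-1)) = 0$.)
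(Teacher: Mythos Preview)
Your argument is correct and complete. It is, however, organized rather differently from the paper's proof, so a brief comparison is in order.

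The paper proceeds by reduction and contradiction: it first replaces $F$ by $F' = \wt{F} + \wt{F}\circ(-1)$ (noting $\msl(F') = 2\msl(F)$), then normalizes so that $\mu(F)=0$, and finally must show $\mu(\msl(F))=1$. For this last step it writes $\msl(F) = 2G$ with $G$ the transform of $m_F|_{1+4\BZ_2}$, expresses $G$ as $g\circ\beta_v$ for an explicit rational function $g$ built from $f$ and the $4$-torsion, and argues that if $\mu(G)>0$ then also $\mu(G\circ(-1))>0$, whence $m_F = m_F|_{1+4\BZ_2} + m_F|_{-1+4\BZ_2}$ would have positive $\mu$-invariant, a contradiction.

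Your route is more direct: you identify both sides as Mahler transforms of explicitly related measures, namely $\msl(F) = \CM(\ell_*(\nu^+|_U))$ and $\wt{F}+\wt{F}\circ(-1) = \CM(\nu^+)$, and then observe that the $[-1]$-symmetry of $\nu^+$ forces $\mu(\nu^+|_U)=\mu(\nu^+|_{-U})$, so $\mu(\nu^+)=\mu(\nu^+|_U)=\mu(\ell_*(\nu^+|_U))$. This avoids both the preliminary reduction and the contradiction, and it makes transparent why the symmetrization $\wt{F}\mapsto \wt{F}+\wt{F}\circ(-1)$ is exactly what is needed at $p=2$. Your closing remark is also on point: neither argument genuinely uses the rationality of $f$ for the identity of $\mu$-invariants itself (in the paper's proof the step ``$\mu(G)>0\Rightarrow\mu(G\circ(-1))>0$'' is just integral power-series substitution); the rationality hypothesis is recorded because it is essential in the subsequent application (Lemma~\ref{lem3.5}).
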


\begin{proof}
Firstly, we may assume that $\wt{F} = F$ and $F \circ (-1) = F$. Indeed, we put $F' = \wt{F} + \wt{F} \circ (-1)$. If the lemma holds for $F'$ then it holds for $F$, since
$$
\msl(F') = 2\msl(F), ~ \wt{F}' + \wt{F}' \circ (-1) = 2F' = 2(\wt{F} + \wt{F} \circ (-1)).
$$
Moreover, we may also assume that $\mu(F)=0$. Indeed, replacing $f$ by $\pi^{-t}f$, where $\pi$ is a uniformizer of $H_v$, both $\mu$-invariants are decreased by $t$. Hence we have to show that $\mu(\msl(F)) = 1$.

By \eqref{3.3}, we have
$$
\msl(F)(u^s-1) = 2\int_{1+2^2\BZ_2} x^s dm_F(x) = 2 G(u^s-1)
$$
where $G(w)$ is the formal power series associated to $m_F\rvert_{1+2^2\BZ_2}$. Since the characteristic function of $1+2^2\BZ_2$ is given by $1_{1+2^2\BZ_2}(u) = \frac{1}{4} \sum_{i=1}^4 \zeta_4^{(1-u)i}$ with $\zeta_4$ a primitive $4$-th root of unity, we have $G(w) = g(\beta_v(w))$ where $g$ is a rational function on $A$ given by
$$
g(t) = \frac{1}{4} \sum_{i=1}^4 \zeta_4^{-i} f(t+t_i),~t_i = \beta_v(\zeta_4^i-1),
$$
with coefficients in the ring of integers of $H_v(A_4)$. We denote by $\pi'$ a uniformizer of $H_v(A_4)$.

Assume that $g \equiv 0 \mod \pi'$, i.e. $\mu(G) > 0$. Clearly, we have $\mu(G \circ (-1)) >0$. By the first assertion, it is easily seen that $m_F = m_F\rvert_{\BZ_2^\times}$ and that $G \circ (-1)$ is associated to $m_F\rvert_{-1+2^2\BZ_2}$. But then
$$
m_F = m_F\rvert_{\BZ_2^\times} = m_F\rvert_{1+2^2\BZ_2} + m_F\rvert_{-1+2^2\BZ_2}
$$
has positive $\mu$-invariant, which contradicts the second assumption that $\mu(F) = 0$. Hence we have $\mu(G) = 0$ and then $\mu(\msl(F))=1$.
\end{proof}

\begin{lem}\label{lem3.4}
For each $\chi \in G^\ast$, we have
$$
2G_\fp(\chi; w) = \msl\left(\sum_{\fa \in \fC} \chi(\sigma_\fa) (\omega^{-1} \ast D\fB_{\rho,\fa})\right)(u^{-1}(1+w)-1) \cdot u_\chi(w)
$$
where $u_\chi(w)$ is a unit in $\CI_\fp[[w]]$. Here, $\omega$ is the Teichm\"uller character on $\BZ_2^\times$ and $\omega^{-1} \ast F$ denotes the formal power series associated to the measure $\omega^{-1} \cdot m_F$.
\end{lem}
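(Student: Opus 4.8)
The plan is to unwind every definition entering $G_\fp(\chi;w)$ --- passing back through $m_\chi$, the pseudo-measure $\nu_\chi$, its numerator $\nu_\chi^\circ$, the family $\{\nu_{\rho,\fa}\}_{\fa\in\fC}$ and the power series $\{\fB_{\rho,\fa}\}$ --- and to match the outcome against $\msl$ applied to $\sum_{\fa\in\fC}\chi(\sigma_\fa)(\omega^{-1}*D\fB_{\rho,\fa})$, absorbing every discrepancy into a single unit $u_\chi(w)\in\CI_\fp[[w]]$.

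First I would record the measure/power-series dictionary. Under Mahler's isomorphism $\CM$ of \eqref{2.7}, $\fB_{\rho,\fa}$ corresponds to $\nu_{\rho,\fa}$, while $D=(1+w)\frac{d}{dw}$ corresponds to multiplication by the coordinate $x$; hence $D\fB_{\rho,\fa}$ is attached to the measure $x\,d\nu_{\rho,\fa}(x)$. Since $\nu_{\rho,\fa}$ is supported on $\BZ_2^\times$ and $x=\omega(x)\pair{x}$ there, the measure attached to $\omega^{-1}*D\fB_{\rho,\fa}$ is $\pair{x}\,d\nu_{\rho,\fa}(x)$, and the defining property \eqref{3.3} of $\msl$ gives
$$
\msl\bigl(\omega^{-1}*D\fB_{\rho,\fa}\bigr)(u^{s}-1)=\int_{\BZ_2^\times}\pair{x}^{\,s+1}\,d\nu_{\rho,\fa}(x),\qquad s\in\BZ_2.
$$
The substitution $w\mapsto u^{-1}(1+w)-1$ in the statement is exactly what cancels the extra exponent: it sends $u^{s}-1$ to $u^{s-1}-1$, so the left-hand factor of the lemma is the power series $Q_\chi(w)$ determined by $Q_\chi(u^{s}-1)=\sum_{\fa\in\fC}\chi(\sigma_\fa)\int_{\BZ_2^\times}\pair{x}^{\,s}\,d\nu_{\rho,\fa}$.

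Next I would reinterpret $Q_\chi$ over $\msg=\Gal(F_\infty/K)$ and connect it to $G_\fp(\chi;w)$. Identifying $\CG$ with $\BZ_2^\times$ via $\chi_\fp$ as in \S2, the character $\kappa_\fp:=\kappa\circ(\msg\twoheadrightarrow\Gal(K_\infty/K))$ restricts on $\CG$ to $\pair{\chi_\fp}$ (up to the chosen normalisation of $\kappa$), because $\chi_\fp(\Gamma)=1+2^2\BZ_2$ and $\Delta=\Gal(F_\infty/H_\infty)$ dies in $\Gal(K_\infty/K)$; moreover the ideals $\fa\in\fC$ may be taken with trivial Artin symbol in $\Gal(K_\infty/K)$, since $H\cap K_\infty=K$ and $\nu_\chi^\circ$ is independent of the choice of representatives by the cited Lemma~I.3.4 of \cite{dS}. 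With these normalisations $\int_{\msg}\kappa_\fp^{\,s}\,d\nu_\chi^\circ=Q_\chi(w)$. Now $\nu_\chi^\circ=\theta_\rho\cdot\nu_\chi$ by \thmref{thm2.5} and \lemref{lem2.6}, and $\kappa_\fp^{\,s}$ is a homomorphism, so $\int_{\msg}\kappa_\fp^{\,s}\,d\nu_\chi=Q_\chi(w)\big/\int_{\msg}\kappa_\fp^{\,s}\,d\theta_\rho$, where \lemref{lem2.6} identifies the denominator with $(1+w)^{a}-(1+w)^{b}$, $a\in2\BZ_2$, $b\notin2\BZ_2$. Finally, by \lemref{lem3.1} and \eqref{3.1}, $m_\chi$ is the push-forward of $\nu_\chi$ along $\msg\twoheadrightarrow\Gal(H_\infty/K)$, so $L_\fp(s,\chi)=\int_{\Gal(H_\infty/K)}\kappa^{s}\,dm_\chi=\int_{\msg}\kappa_\fp^{\,s}\,d\nu_\chi$; multiplying by $(u^{s}-1)^{e}$ as in \eqref{3.2} and comparing with $Q_\chi$ exhibits $2G_\fp(\chi;w)$ as $Q_\chi(w)$ times a rational function of $w$ assembled from $2$, $(u^{s}-1)^{e}$ and $(1+w)^{a}-(1+w)^{b}$; that rational function is forced to be $u_\chi(w)$, and one verifies it is a unit using the case distinction in the definition of $e$ together with the vanishing $\fB_{\rho,\fa}(0)=\tfrac12\log\fD_{\rho,\fa}(0)=0$ of \lemref{lem2.2} (equivalently, that each $\nu_{\rho,\fa}$ has total mass $0$), which makes the would-be zeros and poles in $w$ cancel.

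The main obstacle is this last compatibility check. One must simultaneously control, for every $\chi\in G^{*}$: (i) the descent from $F_\infty$ to $H_\infty$ encoded by $(1+\delta)$ and \lemref{lem3.1}; (ii) the passage from the pseudo-measure $\nu_\chi$ to the honest Iwasawa power series $G_\fp(\chi;w)$ via the normalising factor $(u^{s}-1)^{e}$; and (iii) the precise shape of $\int_{\msg}\kappa_\fp^{\,s}\,d\theta_\rho$; and then check that these conspire to leave behind a genuine unit of $\CI_\fp[[w]]$ --- no extraneous $w$-factor and no pole --- rather than merely something with vanishing $\mu$-invariant. That $\fB_{\rho,\fa}(0)=0$ for each $\fa$, together with the freedom in choosing $\fC$, is what makes the bookkeeping come out exactly right, and keeping all of these normalisations straight is the technical heart of the lemma.
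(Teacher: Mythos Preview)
Your approach is essentially the same as the paper's: both arguments unwind the chain
\[
G_\fp(\chi;w)\;\leftrightarrow\; L_\fp(s,\chi)\;\leftrightarrow\; m_\chi\;\leftrightarrow\;\nu_\chi\;\leftrightarrow\;\nu_\chi^\circ=\theta_\rho\cdot\nu_\chi\;\leftrightarrow\;\{\nu_{\rho,\fa}\}\;\leftrightarrow\;\{\fB_{\rho,\fa}\},
\]
use the Mahler dictionary $DF\leftrightarrow x\,dm_F$ together with $x=\omega(x)\pair{x}$ on $\BZ_2^\times$, and then absorb the contribution of $\theta_\rho$ into the unit $u_\chi$ via \lemref{lem2.6}. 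The paper runs the computation starting from $\int_{\msg}\kappa^s\,d\nu_\chi=2L_\fp(s,\chi)$ and expanding $\int_{\msg}\kappa^s\,d\nu_\chi^\circ$; you run it in the opposite direction, starting from the $\msl$-side, but the content is identical.

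One concrete slip: your claim that $\fB_{\rho,\fa}(0)=\tfrac12\log\fD_{\rho,\fa}(0)=0$ does not follow from \lemref{lem2.2}. That lemma only gives $\fD_{\rho,\fa}(t)\in 1+\fm_v[[t]]$, hence $\fD_{\rho,\fa}(0)\in 1+\fm_v$ and $\fB_{\rho,\fa}(0)\in\fm_v$, not $\fB_{\rho,\fa}(0)=0$; indeed $\fI_{\rho,\fa}(O)=\fR_{\rho,\fa}(O)^2/\fR_{\rho,\fa\fp}(O)$ has no reason to equal $1$. So the mechanism you invoke to guarantee that the ``would-be zeros and poles in $w$ cancel'' is not available. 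Relatedly, your bookkeeping for the factor of $2$ is loose: you list ``$2$'' among the pieces assembled into $u_\chi$, but $2$ is not a unit in $\CI_\fp[[w]]$. The paper handles this differently, asserting directly that $\int_{\msg}\kappa^s\,d\nu_\chi=2L_\fp(s,\chi)$ (so the $2$ appears on the $G_\fp$-side, not inside $u_\chi$), and then that $\int_{\msg}\kappa^s\,d\theta_\rho$ equals $u_\chi(w)^{-1}$ or $u_\chi(w)^{-1}w$ according as $\chi\neq 1$ or $\chi=1$; you should track the descent through \eqref{3.1} more carefully to recover that $2$, and use the structure of $\theta_\rho|_\Gamma$ from \lemref{lem2.6} (rather than the unsupported vanishing $\fB_{\rho,\fa}(0)=0$) for the unit claim.
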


\begin{proof}
By \eqref{3.1} it is easy to check that
$$
\int_{\msg} \kappa(\sigma)^s d\nu_\chi(\sigma) = 2 L_\fp(s, \chi).
$$
On the other hand, we recall that $\fB_{\rho,\fa}(w)$ is the formal power series associated to $\nu_{\rho,\fa}$, i.e. $m_{\fB_{\rho,\fa}} = \nu_{\rho,\fa}$. Hence we have
\begin{align*}
\int_{\msg} \kappa(\sigma)^s d\nu_\chi^\circ(\sigma)
& = \sum_{\fa\in \fC} \chi(\sigma_\fa) \int_\CG \kappa(\sigma)^s dm_{\fB_{\rho,\fa}}(\sigma) \\
& =  \int_{\BZ_2^\times} \pair{x}^s dm_{\left(\sum_{\fa \in \fC} \chi(\sigma_\fa)\fB_{\rho,\fa}\right)}(x) \\
& = \int_{\BZ_2^\times} \pair{x}^{s-1} dm_{\left(\sum_{\fa \in \fC} \chi(\sigma_\fa) (\omega^{-1} \ast D\fB_{\rho,\fa})\right)}(x).
\end{align*}
The proof of the lemma is now complete, since the integral on the measure $\theta_\rho$ can be written as $u_\chi(w)^{-1}$ or $u_\chi(w)^{-1}w$ according as $\chi\neq 1$ or $\chi=1$.
\end{proof}

Recall that the formal power series $D\fB_{\rho,\fa}(w)$ is a rational function whose integral power expansion in $z$ is given by
\begin{equation}\label{3.4}
\frac{1}{2}\Omega_v \frac{d}{dz} \log \fI_{\rho,\fa}(\eta_A(\fa)(\fM(z,\CL)).
\end{equation}
By our construction, it is clear that $\wt{D\fB_{\rho,\fa}} = D\fB_{\rho,\fa}$. Moreover, $D\fB_{\rho,\fa}$ and $D\fB_{\rho,\fa} \circ (-1)$ have the same poles, which implies that $D\fB_{\rho,\fa} = D\fB_{\rho,\fa} \circ (-1)$. We also note that $\mu(F) = \mu(\omega \ast F)$. Hence by \lemref{3.3} and \lemref{3.4}, the proof of \thmref{thm3.2} is now complete by the following lemma.
%$$
%\mu\left( \sum_{\fa \in \fC} \chi^{-1}(\sigma_\fa) (\omega^{-1} \ast D\fB_{\rho,\fa})\right) = \mu\left( \sum_{\fa \in \fC} \chi^{-1}(\sigma_\fa) D\fB_{\rho,\fa}\right) = 0.
%$$

\begin{lem}\label{lem3.5}
For each $\chi \in G^\ast$, we have $\mu\left(\sum_{\fa \in \fC} \chi(\sigma_\fa)D\fB_{\rho,\fa}\right) = 0$.
\end{lem}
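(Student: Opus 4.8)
The plan is to reduce everything modulo $2$ and run an elliptic version of Sinnott's non-vanishing argument. Since $\CI_\fp$ has uniformiser $2$ and residue field $\ov{\BF}_2$, the statement $\mu\bigl(\sum_{\fa\in\fC}\chi(\sigma_\fa)D\fB_{\rho,\fa}\bigr)=0$ is equivalent to the non-vanishing in $\ov{\BF}_2[[w]]$ of the mod‑$2$ reduction of $\sum_{\fa\in\fC}\chi(\sigma_\fa)D\fB_{\rho,\fa}$; throughout I write $\chi(\sigma_\fa)$ also for its reduction, an $h$‑th root of unity and hence a unit of $\ov{\BF}_2$ (recall $h$ is odd). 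Using $\fI_{\rho,\fa}=\fR_{\rho,\fa}^2/\fR_{\rho,\fa\fp}(\eta_{A^\fa}(\fp)(P))$ and $m_{\rho,\fa}=\tfrac12\log\fD_{\rho,\fa}$, I would first write, with $D=(1+w)\tfrac{d}{dw}$,
$$
D\fB_{\rho,\fa}(w)=D\log\bigl(\fR_{\rho,\fa}\circ\beta_v^\fa\bigr)(w)-\tfrac12\,D\log\bigl(\fR_{\rho,\fa\fp}\circ\wh{\eta_{A^\fa}(\fp)}\circ\beta_v^\fa\bigr)(w),
$$
where $\fR_{\rho,\fa}\circ\beta_v^\fa$ and $\fR_{\rho,\fa\fp}\circ\beta_v^\fa$ are units of $\CI_\fp[[w]]$ because $\fR_{\rho,\fa},\fR_{\rho,\fa\fp}$ have good reduction at $v$ and (as $N\lambda=N\bar\lambda$) vanish to order $0$ at $O$; so each term lies in $\CI_\fp[[w]]$. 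For the second term one uses that $\wh{\eta_{A^\fa}(\fp)}\circ\beta_v^\fa=\beta\circ[c]$ for some isomorphism $\beta\colon\wh{\BG}_m\to\wh{A^{\fa\fp,v}}$ over $\CI_\fp$ and some $c\in\BZ_2$ with $v_2(c)=1$, whence $\tfrac12 D\log\bigl((\fR_{\rho,\fa\fp}\circ\beta)\circ[c]\bigr)=\tfrac c2\,\bigl(D\log(\fR_{\rho,\fa\fp}\circ\beta)\bigr)\circ[c]$ with $\tfrac c2$ a $2$‑adic unit. Reducing mod $2$: the first term becomes $\ov{\Omega_{\fa,v}}\,(\bar\beta_v^\fa)^*\!\bigl(d\ov{\fR}_{\rho,\fa}/(\ov{\fR}_{\rho,\fa}\,\ov{\omega}_\fa)\bigr)$, the pullback along $\bar\beta_v^\fa$ of the logarithmic derivative of the rational function $\ov{\fR}_{\rho,\fa}$ on $\wt{A^\fa}/\ov{\BF}_2$; while the second term becomes $\bigl(\ov{D\log(\fR_{\rho,\fa\fp}\circ\beta)}\bigr)\circ\ov{[c]}$, and since $v_2(c)=1$ the power series $\ov{[c]}(w)$ is a power series in $w^2$ with no constant term, so this contribution lies in $\ov{\BF}_2[[w^2]]$, i.e.\ it is a square in $\ov{\BF}_2[[w]]$.

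Summing over $\fa\in\fC$ and using that any $\ov{\BF}_2$‑linear combination of squares in $\ov{\BF}_2[[w]]$ is again a square, we obtain
$$
\overline{\sum_{\fa\in\fC}\chi(\sigma_\fa)D\fB_{\rho,\fa}}\;=\;X\;+\;(\text{a square}),\qquad X:=\sum_{\fa\in\fC}\chi(\sigma_\fa)\,\ov{\Omega_{\fa,v}}\,(\bar\beta_v^\fa)^*\!\Bigl(\frac{d\ov{\fR}_{\rho,\fa}}{\ov{\fR}_{\rho,\fa}\,\ov{\omega}_\fa}\Bigr).
$$
Adding a square does not affect squareness, and $0$ is a square, so it suffices to show $X$ is not a square in $\ov{\BF}_2[[w]]$. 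Transporting along $\bar\beta_v^\fa=\wh{\wt{\eta_A(\fa)}}\circ\bar\beta_v$ and using $\Omega_{\fa,v}=\xi(\fa)\Omega_v$ (Lemma~\ref{lem2.3}) together with $\eta_A(\fa)^*\omega^\fa=\xi(\fa)\omega$, the $\fa$‑dependent periods cancel and
$$
X=\ov{\Omega_v}\,(\bar\beta_v)^*(Z),\qquad Z:=\sum_{\fa\in\fC}\chi(\sigma_\fa)\,\frac{d\bigl(\ov{\fR}_{\rho,\fa}\circ\wt{\eta_A(\fa)}\bigr)}{\bigl(\ov{\fR}_{\rho,\fa}\circ\wt{\eta_A(\fa)}\bigr)\,\ov{\omega}}\in\ov{\BF}_2(\wt A).
$$
As $\ov{\fR}_{\rho,\fa}\circ\wt{\eta_A(\fa)}$ is regular and nonzero at $O$, $Z$ is regular at $O$; hence $X$ is a square exactly when $Z$ is a square on $\wt A$, i.e.\ when $dZ=0$, i.e.\ — $Z$ being a sum of rational functions with at worst simple poles — exactly when $Z$ is constant.

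So it remains to exhibit a pole of $Z$. Here the input is the explicit divisor $\mathrm{div}(\fR_{\rho,\fa})=\sum_{P\in A^\fa_{\bar\lambda}\setminus 0}(P)-\sum_{P\in A^\fa_{\lambda}\setminus 0}(P)$ coming from $\rho=\{(\lambda,1),(\bar\lambda,-1)\}$: since $\lambda$ is prime to $2$ the reduction map is injective on $\lambda$‑ and $\bar\lambda$‑torsion, and since $\fa$ is prime to $\fp$ the isogeny $\wt{\eta_A(\fa)}$ is étale, so
$$
\mathrm{div}\bigl(\ov{\fR}_{\rho,\fa}\circ\wt{\eta_A(\fa)}\bigr)=\sum_{Q\in\wt A_{(\bar\lambda)\fa}\setminus\wt A_\fa}(Q)-\sum_{Q\in\wt A_{(\lambda)\fa}\setminus\wt A_\fa}(Q),
$$
so the $\fa$‑th term of $Z$ has simple poles exactly on $\bigl(\wt A_{(\bar\lambda)\fa}\cup\wt A_{(\lambda)\fa}\bigr)\setminus\wt A_\fa$. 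Because the $\mu$‑invariant in question does not depend on the choice of the set $\fC$, I may take $\fC=\{\fa_1,\dots,\fa_h\}$ to consist of distinct prime ideals prime to $2q$, and by imposing finitely many further congruences on $\lambda$ in Lemma~\ref{lem2.6} (for instance, requiring $(\lambda)$ to be a prime of degree one prime to $\fa_1\cdots\fa_h$, which is possible by the Chebotarev density theorem) I may assume in addition that $(\lambda)$ and $(\bar\lambda)$ are coprime and prime to $\fa_1\cdots\fa_h$. Now fix $\fa_0\in\fC$ and a point $P_0\in\wt A$ of exact annihilator $(\lambda)\fa_0$ — such a $P_0$ exists because $\wt A_{(\lambda)\fa_0}\cong\CO/(\lambda)\fa_0$ is a cyclic $\CO$‑module. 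From the displayed divisors one checks that $P_0\in\wt A_{(\lambda)\fa_0}\setminus\wt A_{\fa_0}$ but $P_0\notin\bigl(\wt A_{(\bar\lambda)\fa}\cup\wt A_{(\lambda)\fa}\bigr)\setminus\wt A_\fa$ for every $\fa\in\fC$ with $\fa\neq\fa_0$ (using $\fa_0\nmid\fa$ and the coprimality of $(\lambda),(\bar\lambda)$ with the $\fa$'s). Hence at $P_0$ only the $\fa_0$‑th term of $Z$ contributes, so $Z$ has a genuine simple pole at $P_0$ with leading coefficient a nonzero multiple of $\chi(\sigma_{\fa_0})$. Therefore $Z$ is non-constant, $X$ is not a square, and $\mu\bigl(\sum_{\fa\in\fC}\chi(\sigma_\fa)D\fB_{\rho,\fa}\bigr)=0$.

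The main obstacle is the second step, namely correctly identifying the reduction modulo $2$. Since $\wt{\fI_{\rho,\fa}}=1$ identically (the isogeny $\eta_{A^\fa}(\fp)$ reduces to Frobenius and $\fR_{\rho,\fa\fp}$ to the Frobenius twist of $\fR_{\rho,\fa}$), the naive logarithmic derivative vanishes in characteristic $2$, and all of the content sits at the ``second layer''; one must therefore track the factor $\tfrac12$ in $m_{\rho,\fa}=\tfrac12\log\fD_{\rho,\fa}$ precisely. The splitting above does this, isolating the contribution of $\fp$ — which becomes a square mod $2$ because the action of $\fp$ on the formal group reduces to a map of the form $w\mapsto(\text{power series in }w^2)$ — from the contribution of $\fR_{\rho,\fa}$, which survives and carries a non-trivial divisor. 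The final divisor-theoretic step is then the elliptic analogue of Sinnott's linear-independence argument and is comparatively routine.
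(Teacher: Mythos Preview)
Your argument is correct and follows the same Sinnott-style strategy as the paper, but the technical execution differs in two places worth noting. First, the paper writes $D\fB_{\rho,\fa}$ out explicitly as a sum of simple fractions on $A$ and argues directly with residues: the poles coming from $\fR_{\rho,\fa\fp}(\eta_{A^\fa}(\fp)(P))$ lie at points $M-U$ with $U\in A_{\fa\fp}$, and since $A_\fp$ reduces to zero these collapse in pairs mod $v$, doubling the residue so that it vanishes in characteristic $2$; the remaining poles $M-W$ with $W\in A_\fa$ survive with unit residue. You instead isolate the $\fp$-contribution structurally on the formal group, observing that $\wh{\eta_{A^\fa}(\fp)}\circ\beta_v^\fa=\beta\circ[c]$ with $v_2(c)=1$ forces that term to land in $\ov{\BF}_2[[w^2]]$ and hence be a square; this is conceptually cleaner and sidesteps any delicate integrality bookkeeping coming from the explicit $\tfrac12$. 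Second, the paper's final sentence (``Clearly the same is true for the sum \ldots'') passes over the question of whether poles from distinct $\fa\in\fC$ might coincide and cancel; you address this explicitly by first noting that $\mu$ is independent of the choice of $\fC$ and of the auxiliary $\lambda$, and then choosing $\fC$ to consist of distinct primes and $(\lambda)$ to be a split prime coprime to them, which lets you exhibit a point $P_0$ of exact annihilator $(\lambda)\fa_0$ that is a pole of exactly one summand of $Z$. Both approaches prove the lemma; yours trades an explicit residue computation for a short formal-group argument and is more careful about the linear-independence step.
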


\begin{proof}
Recall that $v$ is our fixed prime of $H$ above $\fp$. Let $\wt{A}$ denote the reduced curve modulo $v$. It suffices to show that the reduction modulo $v$ of the function $\sum_{\fa \in \fC}\chi(\sigma_\fa) D\fB_{\rho,\fa}$ has some poles on $\wt{A}$ with non-zero residue modulo $v$.

By \eqref{3.4}, the function $D\fB_{\rho,\fa}$ can be written as a rational function on $A$
\begin{equation}\label{3.5}
\frac{1}{2}\Omega_v \frac{d}{dz} \log \left( \frac{\prod_{i=1}^r R_{\lambda_i,\fa}(\eta_A(\fa)(P))^{2n_i}}{\prod_{i=1}^r R_{\lambda_i,\fa\fp}(\eta_A(\fa\fp)(P))^{n_i}} \right).
\end{equation}
As \eqref{2.2}, we have the relations
$$
R_{\lambda_i, \fa}(\eta_A(\fa)(P)) = \prod_{W\in A_\fa} R_{\lambda_i}(P\oplus W), ~
R_{\lambda_i,\fa}(\eta_A(\fa\fp)(P)) = \prod_{U\in A_{\fa\fp}} R_{\lambda_i}(P \oplus U).
$$
Therefore, \eqref{3.5} is equal to
\begin{multline*}
\frac{1}{2}\Omega_v \sum_{i=1}^r -2n_i \left(\sum_{W\in A_\fa} \sum_{M \in V_{\lambda_i}} \frac{-2y(P\oplus W)+a_1x(P\oplus W) +a_3}{x(P\oplus W) - x(M)}\right) \\
+ \frac{1}{2}\Omega_v \sum_{i=1}^r n_i \left(\sum_{U \in A_{\fa\fp}} \sum_{M \in V_{\lambda_i}} \frac{-2y(P \oplus U)+a_1x(P\oplus U) +a_3}{x(P \oplus U) - x(M)}\right).
\end{multline*}

We now analyze its possible poles of the reduction of this function on $\wt{A}$. For the second term, we see that they could come from the points $M-U$ for all $M \in V_{\lambda_i}$ and $U \in A_{\fa\fp}$. By the $t$-expansions of $x$ and $y$, we can easily compute that the residue at each $M-U$ is equal to $- n_i \Omega_v$. This is a $\fp$-adic unit because we chose $n_i = \pm 1$ in the proof of \lemref{lem2.6}. However, as $A_\fp$ reduces to zero modulo $v$, the residue at such a pole on $\wt{A}$ is a multiple of $2$, and hence reduces to zero modulo $v$.

For the first term, we note that $x$ is an even function, in particular $x(M) = x(-M)$. Thus this term is equal to
$$
-\frac{1}{2}\Omega_v \sum_{i=1}^r n_i \left(\sum_{W\in A_\fa} \sum_{M \in A_{\lambda_i} \setminus \{0\}} \frac{-2y(P\oplus W)+a_1x(P\oplus W) +a_3}{x(P\oplus W) - x(M)}\right).
$$
Clearly the poles must come from the points $M-W$ for $M \in A_{\lambda_i} \setminus \{0\}$ and $W \in A_\fa$. The residue at each $M-W$ is equal to $n_i\Omega_v$, which is a $\fp$-adic unit. Since reduction modulo $v$ is injective on the set of these $M-W$, each of these $M-W$ gives a pole of the reduced function on $\wt{A}$. Note that as $i=1, \cdots, r$, all of these poles on $\wt{A}$ are distinct because $M$ is a non-zero element of $A_{\lambda_i}$.

Hence the set of poles of the reduction of the function $D\fB_{\rho,\fa}$ on $\wt{A}$ is given by the reduction modulo $v$ of
\begin{equation}\label{3.6}
\wpair{M-W \mid M \in A_{\lambda_i} \setminus \{0\},~ W \in A_\fa},
\end{equation}
and their residues are non-zero modulo $v$. Clearly the same is true for the sum $\sum_{\fa \in \fC} \chi(\sigma_\fa) D\fB_{\rho,\fa}$ because each $\chi(\sigma_\fa)$ is an $h$-th root of unity and thus a $\fp$-adic unit.
\end{proof}

%%%%%%
\medskip
\section{Vanishing of the $\mu$-invariant for $X(H_\infty)$}

We will show that the Iwasawa invariants of $X(H_\infty)$ and the $\fp$-adic $L$-function $m$ are equal. As a corollary, \thmref{thm1.1} follows immediately from \thmref{thm3.2}. This equality is a well-known result (for example, see \cite{dS}) for the primes $p \neq 2$, but it can easily be extended to $p=2$ in our case, thanks to our assumptions that $2$ splits in $K$ and $(2,h)=1$.

For the remainder of this section, we denote by $\mu$ and $\lambda$ the $\mu$-invariant and the $\lambda$-invariant of $X(H_\infty)$, respectively. Recall that $\Gamma = \Gal(H_\infty/H)$. For each $n \geq 0$, we define $\Gamma_n = \Gamma^{p^n}$ and $H_n = H_\infty^{\Gamma_n}$. We write $M(H_n)$ for the maximal abelian $2$-extension of $H_n$ which is unramified outside of the primes of $H_n$ above $\fp$. Then it is easily seen that the $\Gamma_n$-coinvariants of $X(H_\infty)$ is given by
$$
X(H_\infty)_{\Gamma_n} = \Gal(M(H_n)/H_\infty).
$$
We have the following asymptotic formula of Iwasawa
\begin{equation}\label{4.1}
\ord_2([M(H_n): H_\infty]) = 2^n \mu + \lambda n + c,~ n\gg0,
\end{equation}
where $c \in \BZ$ is a constant independent of $n$. One can compute this $2$-adic valuation using the methods of Coates and Wiles \cite{CW1}. Let $\fP$ be any prime of $H_n$ lying above $\fp$, and let $U_{n, \fP}$ denote the group of principal units of the completion $H_{n, \fP}$. Write $U_n = \prod_{\fP \mid \fp} U_{n,\fP}$ and $\Phi_n = \prod_{\fP \mid \fp} H_{n,\fP}$. Let $E_n$ be the group of units of $H_n$. As $E_n$ is canonically embedded into $U_n$, let $\ov{E}_n$ be the $\BZ_2$-submodule of $U_n$ generated by $E_n$, and let $D_n$ be the $\BZ_2$-submodule of $U_n$ generated by $E_n$ and $(1+2^2)$. Let $R_\fp(H_n)$ denote the $\fp$-adic regulator for $H_n/K$. Let $\Delta(H_n/K)$ denote the discriminant of $H_n/K$, and choose any generator $\Delta_\fp(H_n/K)$ of the ideal $\Delta(H_n/K)\CO_\fp$.

\begin{thm}\label{thm4.1}
We have
$$
\ord_2([M(H_n):H_\infty]) = \ord_2\left( \frac{h(H_n) R_\fp(H_n)}{\omega(H_n)\sqrt{\Delta_\fp(H_n/K)}} \prod_{\fP \mid \fp} (1-(N\fP)^{-1}) \right) + n+2
$$
where $h(H_n)$ is the class number of $H_n$, $\omega(H_n)$ is the number of roots of unity in $H_n$ and $N\fP$ is the absolute norm of $\fP$.

\end{thm}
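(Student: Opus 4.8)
The plan is to compute $\ord_2([M(H_n):H_\infty])$ directly by global class field theory, reducing it to the index of a pair of $\BZ_2$-lattices of local units, and then to evaluate that index by means of the $\fp$-adic logarithm in the style of Coates and Wiles; this last step is where the quantities $R_\fp(H_n)$, $\sqrt{\Delta_\fp(H_n/K)}$ and the Euler factors $\prod_{\fP\mid\fp}(1-(N\fP)^{-1})$ enter.

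First, let $L_n$ be the maximal subextension of $M(H_n)/H_n$ which is unramified at every place. Since $H_n$ is totally complex, $\Gal(L_n/H_n)$ is the $2$-part of the ideal class group of $H_n$, of order the $2$-part of $h(H_n)$, and the inertia subgroups at the primes above $\fp$ generate $\Gal(M(H_n)/L_n)$. Global class field theory --- with no archimedean contribution since $H_n$ is totally complex, and with the tame parts disappearing because the residue fields at $\fp$ have odd order --- gives a natural surjection $U_n \twoheadrightarrow \Gal(M(H_n)/L_n)$ with kernel $\ov E_n$, and hence an exact sequence
\begin{equation*}
0 \longrightarrow U_n/\ov E_n \longrightarrow \Gal(M(H_n)/H_n) \longrightarrow \Cl(H_n)(2) \longrightarrow 0 .
\end{equation*}
Because $H_\infty/H_n$ is totally ramified at $\fp$, the map $U_n \to \Gal(H_\infty/H_n)=\Gamma_n$ is surjective; writing $V_n$ for the kernel of the resulting surjection $U_n/\ov E_n \twoheadrightarrow \Gamma_n$, and using that $\ov E_n$ lies in the kernel of $U_n \to \Gamma_n$ (global elements have trivial Artin symbol), one obtains $\ord_2([M(H_n):H_\infty]) = \ord_2(h(H_n)) + \ord_2(\#V_n)$ together with $\#V_n = [\ker(U_n\to\Gamma_n):\ov E_n]$. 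This index is finite, since $\ov E_n$ and $\ker(U_n\to\Gamma_n)$ both have $\BZ_2$-rank $[H_n:K]-1$; the statement that $\ov E_n$ attains this rank is precisely the $\fp$-adic Leopoldt conjecture for $H_n/K$, which holds by Brumer's theorem as $H_n/K$ is abelian.

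Next I would identify $\#V_n$ with $[U_n:D_n]$ --- this is the reason for introducing $D_n$. By functoriality of the reciprocity map, the composite $U_n\to\Gamma_n\xrightarrow{\ \kappa\ }1+2^{n+2}\BZ_2\xrightarrow{\ \log\ }2^{n+2}\BZ_2$ equals, up to a $2$-adic unit, $u\mapsto\tr_{\Phi_n/\BQ_2}(\log_\fp u)$; in particular it is surjective onto $2^{n+2}\BZ_2$, and it sends the diagonal element $1+2^2\in U_n$ to $\log_2(1+2^2)\cdot[H_n:K]$, which has $2$-adic valuation $2+n$ because $\ord_2(\log_2(1+2^2))=2$ and $\ord_2([H_n:K])=\ord_2(h\,2^n)=n$. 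Thus $1+2^2$ maps to a generator of the image $2^{n+2}\BZ_2$ of $U_n$ in $\Gamma_n$, giving a direct sum decomposition $U_n=\ker(U_n\to\Gamma_n)\oplus\BZ_2(1+2^2)$; hence $D_n=\ov E_n\oplus\BZ_2(1+2^2)$ and $\#V_n=[\ker(U_n\to\Gamma_n):\ov E_n]=[U_n:D_n]$. It therefore remains to prove
\begin{equation*}
\ord_2([U_n:D_n]) = \ord_2\!\left(\frac{R_\fp(H_n)}{\omega(H_n)\sqrt{\Delta_\fp(H_n/K)}}\prod_{\fP\mid\fp}\bigl(1-(N\fP)^{-1}\bigr)\right) + n + 2 .
\end{equation*}

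Finally, this identity is an $\fp$-adic analytic class number formula for $H_n$, which I would prove by the local computation of Coates and Wiles. Applying $\log_\fp:U_n\to\Phi_n$, whose kernel is $\prod_{\fP\mid\fp}\mu_{2^\infty}(H_{n,\fP})$, one compares the images of the chain $\ov E_n\subseteq D_n\subseteq U_n$ inside $\Phi_n$: the covolume of $\log_\fp(\ov E_n)$ in the hyperplane $\{\tr_{\Phi_n/\BQ_2}=0\}$ is the $\fp$-adic regulator $R_\fp(H_n)$ (up to the choice of measure on that hyperplane, which accounts for the discriminant factor below); the one extra direction spanned by $\log_\fp(1+2^2)$ contributes the factor $2^{n+2}$ computed above; and the covolume of $\log_\fp(U_n)$ itself, relative to $\prod_{\fP\mid\fp}\CO_{H_{n,\fP}}$, equals $\prod_{\fP\mid\fp}N\fP\cdot\#\mu_{2^\infty}(H_{n,\fP})$ --- the key local input, obtained from the fact that $\log_\fp$ carries $1+\fm_\fP^k$ onto $\fm_\fP^k$ for $k$ large, together with the filtration of the principal units. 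Assembling these contributions, the local torsion factors cancel against $\omega(H_n)=\#\mu(H_n)$, the factors $N\fP$ reassemble ($2$-adically) into the Euler product $\prod_{\fP\mid\fp}(1-(N\fP)^{-1})$, and the discrepancy between the measure in which $\prod_{\fP\mid\fp}\CO_{H_{n,\fP}}$ has covolume $1$ and the normalization built into $R_\fp(H_n)$ produces exactly $\sqrt{\Delta_\fp(H_n/K)}$. I expect the main obstacle to be precisely this last, purely local, accounting of powers of $2$: since $p=2$ the logarithm is not an isometry on the principal units and $\log_\fp(1+\fm_\fP)$ straddles $\CO_{H_{n,\fP}}$ when $\fP$ is wildly ramified, so one must work through the unit filtration with care and check that the different of $H_{n,\fP}/\BQ_2$ contributes exactly $\sqrt{\Delta_\fp(H_n/K)}$ and nothing more.
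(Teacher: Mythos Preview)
Your proposal is correct and follows essentially the same route as the paper: class field theory splits off the $2$-part of $h(H_n)$, the remaining local piece is identified with the index $[U_n:D_n]$, and the formula for $[U_n:D_n]$ is then the Coates--Wiles computation (Lemmas~7--8 of \cite{CW1}). The only difference is cosmetic: the paper phrases the reduction to $[U_n:D_n]$ via the universal-norm subgroup $Y_n\cap U_n=\Ker(N_{\Phi_n/K_\fp}|_{U_n})$ and $\ov E_n=\Ker(N_{\Phi_n/K_\fp}|_{D_n})$, whereas you use the direct-sum splitting $U_n=\ker(U_n\to\Gamma_n)\oplus\BZ_2(1+2^2)$; these are equivalent because the Artin map $U_n\to\Gamma_n$ factors through $N_{\Phi_n/K_\fp}$.
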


\begin{proof}
Let $C_n$ denote the \idele class group of $H_n$. Let $Y_n = \bigcap_{m \geq n} N_{H_m/H_n} C_m$. Let $L(H_n)$ be the maximal unramified extension of $H_n$ in $M(H_n)$. Class field theory gives an isomorphism
$$
(Y_n \cap U_n) / \ov{E}_n \stackrel{\sim}{\longrightarrow} \Gal(M(H_n)/L(H_n)H_\infty).
$$
Noting that $L(H_n) \cap H_\infty = H_n$ because $H_\infty/H_n$ is totally ramified at $\fP$, we obtain an exact sequence
$$
\xymatrixcolsep{1pc}\xymatrix{ 0 \ar[r] & (Y_n\cap U_n)/\ov{E}_n \ar[r] & \Gal(M(H_n)/H_\infty) \ar[r] & \Gal(L(H_n)/H_n) \ar[r] & 0}.
$$
It is easy to check (see Lemma 5 and 6 of \cite{CW1}) that $
Y_n \cap U_n = \Ker (N_{\Phi_n/K_\fp}\rvert_{U_n})$ and $\ov{E}_n = \Ker(N_{\Phi_n/K_\fp} \rvert_{D_n})$, which follows that $[Y_n \cap U_n:\ov{E}_n]=[U_n:D_n]$. Using methods analogous to Lemma 7 and Lemma 8 of \cite{CW1}, one can obtain
\begin{equation}\label{4.2}
[U_n: D_n] = \ord_2 \left( \frac{R_\fp(H_n)}{\omega(H_n)\sqrt{\Delta_\fp(H_n/K)}} \prod_{\fP\mid \fp} (N\fP)^{-1} \right) +n+2.
\end{equation}
The theorem now follows on noting $\prod_{\fP \mid \fp} (N\fP)^{-1}$ and $\prod_{\fP \mid \fp} (1-(N\fP)^{-1})$ have the same order, and that $\Gal(L(H_n)/H_n)$ is the $2$-primary part of the ideal class group of $H_n$.
\end{proof}

We now begin the computation of the Iwasawa invariants of our $\fp$-adic $L$-function $m$. Given $n \geq 0$, let $\epsilon$ be a non-trivial character of $\Gal(H_n/K)$, say $\epsilon = \chi \theta$, where $\chi$ is a character of $G$ and $\theta$ is a character of $\Gal(H_n/H)$. Let $\ff_\epsilon$ denote the conductor of $\epsilon$ with $(f_\epsilon) = \ff_\epsilon \cap \BZ$. As before, we define
$$
L_{\fp, \ff_\epsilon}(s, \epsilon) = \int_{\Gal(H_\infty/K)} \epsilon^{-1}\kappa^s dm
$$
where $m$ is the $\fp$-adic $L$-function defined in the previous section.

For each $n \geq 0$, we denote by $\fC_n$ a set of integral ideals $\fa$ of $K$ prime to $\fp\fq$, whose Artin symbols $\tau_\fa$ give precisely the Galois group $G = \Gal(H_n/K)$. For the convention, we take $\fC_0 = \fC$. For $\fa \in \fC_n$, we denote by $\delta(\fa)$ the Siegel unit as defined in II.2.2 of \cite{dS}. We also denote by $\varphi_{\ff_\epsilon}(\fa)$ the Robert's invariant as defined in II.2.6 of \cite{dS}. Then we put
$$
G(\epsilon) = \frac{\theta(\fp^m)}{2^m} \sum_\tau  \chi(\tau)(\tau(\zeta_m))^{-1}$$
where the sum runs over $\tau\in\Gal(H_nK(\mathfrak{p}^{*\infty})/K)$ with $\tau \rvert_{K(\mathfrak{p}^{*\infty})} = (\fp^m,K(\mathfrak{p}^{*\infty})/K)$, $m$ is an integer such that $\fp^m \parallel \ff_\epsilon$, and $\zeta_m$ is a primitive $p^m$-th root of unity. Define also
$$S(\epsilon) = \begin{cases} \sum_{\fa \in \fC_n} \epsilon(\fa)\log(\varphi_{\ff_\epsilon}(\fa)) & \text{if $\ff_\epsilon \neq 1$} \\ \frac{1}{h}\sum_{\fa \in \fC} \epsilon(\fa)\log(\delta(\fa)) & \text{if $\ff_\epsilon =1$.} \end{cases}
$$
Then, following the methods of \cite[Theorem II.5.2]{dS}, we obtain
$$
L_{\fp, \ff_\epsilon}(0, \epsilon) = \frac{-1}{12f_\epsilon \omega_{\ff_\epsilon}} G(\epsilon^{-1}) S(\epsilon) \left(1- \frac{\epsilon^{-1}(\fp)}{2}\right)
$$
where $\omega_{\ff_\epsilon}$ denotes the number of roots of unity in $K$ congruent to $1$ modulo $\ff_\epsilon$.

On the other hand, the analytic class number formula, together with Kronecker's theorem (see \S 0.2.7, \S I.2.2 and \S IV.3.9 (6) of \cite{T}), gives
$$
\frac{h(H_n)R_\mathfrak{p}(H_n)}{\omega(H_n)} = \frac{h R_\mathfrak{p}(K)}{\omega(K)} \prod_{\epsilon \neq 1} \frac{S(\epsilon)}{12f_\epsilon \omega_{\ff_\epsilon}}
$$
Clearly, $R_\mathfrak{p}(K) = 1$ and $\omega(K) = 2$. Thus, by \thmref{thm4.1}, we have
$$
\ord_2([M(H_n):H_\infty]) = \ord_2\left( \frac{1}{\sqrt{\Delta_\fp(H_n/K)}} \prod_{\epsilon \neq 1} \frac{S(\epsilon)}{12f_\epsilon w_{\ff_\epsilon}} \prod_{\fP \mid \fp} (1-(N\fP)^{-1}) \right) + n+1.
$$
Furthermore, we have $\prod_{\fP \mid \fp} (1-(N\fP)^{-1}) = \frac{1}{2} \prod_{\epsilon \neq 1} \left( 1- \frac{\epsilon^{-1}(\fp)}{2}\right)$, and the conductor-discriminant formula gives that $\prod_{\epsilon \neq 1} G(\epsilon)$ is $\Delta_\fp(H_n/K)^{-1/2}$ up to a $\fp$-adic unit. It follows that
$$
\ord_2([M(H_n):H_\infty]) = \ord_2 \left( \prod_{\epsilon \neq 1} L_{\fp, \ff_\epsilon}(0, \epsilon) \right) + n.
$$

Define as before $G_\fp(\epsilon; w) \in \CI_\fp[[w]]$ to be the formal power series associated to $L_{\fp, \ff_\epsilon}(s, \epsilon)$. In particular, we have
$$
L_{\fp, \ff_\epsilon}(0, \epsilon) = G_\fp(\epsilon; 0) = (\theta^{-1}(u)-1)^{-e} G_\fp(\chi; \theta^{-1}(u)-1)
$$
where $e = 0$ or $1$ according as $\chi\neq 1$ or $\chi=1$ and $u$ is a fixed topological generator of $1+4\BZ_2$. Noting that $\ord_2\left(\prod_{\theta\neq 1} (\theta^{-1}(u)-1)\right) = n$, we obtain
\begin{equation}\label{4.3}
\ord_2([M(H_n):H_\infty]) = \ord_2 \left(\prod_{\epsilon \neq 1} G_\fp(\chi; \theta^{-1}(u)-1) \right).
\end{equation}

For each $\chi \in G^\ast$, we denote by $\mu_\chi$ and $\lambda_\chi$ the $\mu$-invariant and $\lambda$-invariant of $G_\fp(\chi; w)$, respectively. We define
$\mu^{\mathrm{an}} = \sum_{\chi \in G^\ast} \mu_\chi$ and $\lambda^{\mathrm{an}} = \sum_{\chi \in G^\ast} \lambda_\chi$. For sufficiently large $n$, Theorem \ref{thm3.2} tells us that $\ord_2\left(G_\fp(\chi; \theta^{-1}(u)-1)\right) = \ord_2((\theta^{-1}(u)-1)^{\lambda_\chi})$, and hence
$$
\ord_2\left(\prod_{\epsilon \neq 1} G_\fp(\chi;\theta^{-1}(u)-1)\right) = \lambda^{\mathrm{an}}n + c'
$$
where $c' \in \BZ$ is a constant independent of $n$. By \eqref{4.1} and \eqref{4.3}, we conclude that
$$
\mu = \mu^{an} = 0,~ \lambda = \lambda^{an},~ c= c'.
$$
This completes the proof of Theorem \thmref{thm1.1}.

%%%%%%
\medskip
\section{Numerical examples for the prime $q < 500$}

Before giving numerical examples for $X(H_\infty)$, we point out the following well-known general lemma.

\begin{lem}\label{lem5.1}
Let $K$ be an imaginary quadratic field, and $p$ any rational prime which splits in $K$ and does not divide the class number of $K$. Let $K_\infty$ be the unique $\BZ_p$-extension of $K$ unramified outside one of the primes $\fp$ of $K$ above $p$. Then $K_\infty$ has no non-trivial abelian $p$-extension unramified outside the primes above $\fp$.
\end{lem}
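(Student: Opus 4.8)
The plan is to combine class field theory with the topological form of Nakayama's lemma. Write $M$ for the maximal abelian pro-$p$ extension of $K_\infty$ that is unramified outside the primes above $\fp$, and set $X = \Gal(M/K_\infty)$; we must show $X = 0$. Since $M$ is characterized by a condition stable under $\Gal(\ov{K}/K)$, it is Galois over $K$, and it is unramified over $K$ outside $\fp$ because $K_\infty/K$ is. Hence $\Gamma = \Gal(K_\infty/K) \cong \BZ_p$ acts on $X$ by conjugation, and $X$ becomes a compact module over the complete local ring $\Lambda = \BZ_p[[\Gamma]]$, with maximal ideal $\fm = (p,\gamma-1)$ for $\gamma$ a topological generator of $\Gamma$. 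Moreover $M/K$ is pro-$p$. By the topological Nakayama lemma for compact $\Lambda$-modules it suffices to prove $X/\fm X = 0$, and since $X/(\gamma-1)X$ is the module $X_\Gamma$ of $\Gamma$-coinvariants, it is enough to prove $X_\Gamma = 0$.

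To compute $X_\Gamma$, put $\mathcal{G} = \Gal(M/K)$, so that $1 \to X \to \mathcal{G} \to \Gamma \to 1$ is exact. Because $\Gamma$ is abelian, the closed commutator subgroup of $\mathcal{G}$ is exactly $\ov{(\gamma-1)X}$, so there is an exact sequence $1 \to X_\Gamma \to \mathcal{G}^{\mathrm{ab}} \to \Gamma \to 1$; equivalently, $X_\Gamma = \Gal(M_0/K_\infty)$, where $M_0$ is the maximal subextension of $M/K$ that is abelian over $K$. Now $M_0/K$ is an abelian pro-$p$ extension unramified outside $\fp$ and containing $K_\infty$. So it suffices to show that the maximal abelian pro-$p$ extension $L_K$ of $K$ unramified outside $\fp$ equals $K_\infty$: then $K_\infty \subseteq M_0 \subseteq L_K = K_\infty$, whence $X_\Gamma = \Gal(K_\infty/K_\infty) = 0$.

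It remains to identify $L_K$. By class field theory $\Gal(L_K/K)$ is the maximal pro-$p$ quotient of $\Gal(K(\fp^\infty)/K) = \varprojlim_n \Cl_{\fp^n}(K)$, and passing to the inverse limit in the exact sequences $1 \to (\CO_K/\fp^n)^\times/\mathrm{im}(\CO_K^\times) \to \Cl_{\fp^n}(K) \to \Cl(K) \to 1$ (the first terms forming a surjective system of finite groups, so the limit is exact) gives
\[
0 \longrightarrow \CO_\fp^\times/\ov{\CO_K^\times} \longrightarrow \Gal(K(\fp^\infty)/K) \longrightarrow \Cl(K) \longrightarrow 0,
\]
where $\CO_\fp^\times \cong \BZ_p^\times$ since $\fp$ has residue degree one over $p$. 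The class group $\Cl(K)$ has order prime to $p$ by hypothesis, hence dies in the maximal pro-$p$ quotient; and $\CO_K^\times$ is finite of order prime to $p$, so the maximal pro-$p$ quotient of $\CO_\fp^\times/\ov{\CO_K^\times}$ is isomorphic to $\BZ_p$ (when $p=2$ one uses that $2$ being split in $K$ forces $\CO_K^\times = \{\pm1\}$, which exactly accounts for the torsion of $\BZ_2^\times$, leaving $1+4\BZ_2 \cong \BZ_2$). Therefore $\Gal(L_K/K) \cong \BZ_p$. Since $K_\infty \subseteq L_K$ and $\Gal(K_\infty/K) \cong \BZ_p$ as well, restriction is a surjection $\BZ_p \cong \Gal(L_K/K) \twoheadrightarrow \Gal(K_\infty/K) \cong \BZ_p$, necessarily an isomorphism; hence $L_K = K_\infty$, and the proof is complete.

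The only genuinely delicate step is this last class field theory computation at the prime $p$ itself: one must check that, after dividing out the global units, the local unit group $\CO_\fp^\times$ contributes exactly one copy of $\BZ_p$ to the maximal pro-$p$ quotient, and in the case $p=2$ one must handle the $2$-torsion of $\BZ_2^\times$ — which causes no trouble precisely because the hypothesis that $2$ splits in $K$ forces $\CO_K^\times$ to be $\{\pm1\}$. Everything else in the argument is formal.
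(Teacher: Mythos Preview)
Your proof is correct and follows essentially the same route as the paper's: reduce via Nakayama's lemma to showing $X_\Gamma=0$, identify $X_\Gamma$ with $\Gal(R/K_\infty)$ for $R$ the maximal abelian pro-$p$ extension of $K$ unramified outside $\fp$, and use class field theory on the ray class tower $K(\fp^n)$ to see that $\Gal(R/K)\cong\BZ_p$, whence $R=K_\infty$. Your treatment is slightly more careful in writing $\CO_K^\times$ rather than $\{\pm1\}$ in the local-units quotient and in spelling out why the $p=2$ case causes no trouble, but the argument is the same.
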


\begin{proof}
In a similar notation to that used for the case $p=2$, let $X(K_\infty)$ be the Galois group over $K_\infty$ of the maximal abelian $p$-extension of $K_\infty$ unramified outside the primes above $\fp$. Then, as usual in Iwasawa theory, we have $X(K_\infty)_\Gamma = \Gal(R/K_\infty)$ where $R$ denotes the maximal abelian $p$-extension which is unramified outside $\fp$. Thus $R$ must be the maximal pro-$p$ extension of $K$ contained in the union of the ray class fields of $K$ modulo $\fp^n$ for all $n \geq 1$. Thus, as $p$ does not divide $h$, class field theory tells us that $\Gal(R/K)$ must be the maximal pro-$p$ quotient of
$$
\varprojlim_n \left( (\mathcal{O}/\fp^n)^\times/\{\pm 1\} \right)
$$
which is isomorphic to $\BZ_p$. Thus $R = K_\infty$, and the proof of the lemma is complete by Nakayama's lemma.
\end{proof}

Clearly, the assumption of the above lemma is valid for our situation when $p=2$ and $K = \BQ(\sqrt{-q})$ with $q$ any prime congruent to $7$ modulo $8$. The simplest example is given by $K = \BQ(\sqrt{-7})$, which has class number $1$, in which case $X(H_\infty) = X(K_\infty) = 0$. Somewhat surprisingly, the numerical calculations below show that we seem to quite often
have $X(H_\infty) = 0$ for arbitrary primes $q \equiv 7 \mod \, 8$. However, we point out that
when $2$ divides the class number of $H$, it is easily seen that we must necessarily have
$X(H_\infty) \neq 0$, and therefore of infinite order. Andrzej Dabrowski has kindly informed us that
2 does divide the class number of $H$ for the primes $q = 751$, $q=1367$ and $q = 1399$.

We give a list of numerical examples for the primes $q < 500$. By using SAGE calculation, we obtain the class numbers of $K$ and $H$ and the $\fp$-adic regulator $R_\fp = R_\fp(H/K)$ for $H/K$. By \thmref{thm4.1}, we then obtain the index $[M(H):H_\infty]$. Recall that $M(H)$ denotes the maximal abelian $2$-extension of $H$ which is unramified outside the primes of $H$ lying above $\fp$. If we have $[M(H):H_\infty] = 0$, Nakayama's lemma implies immediately that $X(H_\infty) = 0$. We note that the prime $q = 431$ is the first example in which $X(H_\infty) \neq 0$.

\begin{longtable}{c c c c c}
$q$ & $h(K)$ & $h(H)$ & $\ord_2(R_\fp)$ & $\ord_2\left([M(H):H_\infty]\right)$ \\
\hline
7 & 1 & 1 & 0 & 0\\
23 & 3 & 1 & 2 & 0\\
31 & 3 & 1 & 2 & 0\\
47 & 5 & 1 & 4 & 0\\
71 & 7 & 1 & 6 & 0\\
79 & 5 & 1 & 4 & 0\\
103 & 5 & 1 & 4 & 0\\
127 & 5 & 1 & 4 & 0\\
151 & 7 & 1 & 6 & 0\\
167 & 11 & 1 & 10 & 0\\
191 & 13 & 1 & 12 & 0\\
199 & 9 & 1 & 8 & 0\\
223 & 7 & 1 & 6 & 0\\
239 & 15 & 1 & 14 & 0\\
263 & 13 & 1 & 12 & 0\\
271 & 11 & 1 & 10 & 0\\
311 & 19 & 1 & 18 & 0\\
359 & 19 & 1 & 18 & 0\\
367 & 9 & 1 & 8 & 0\\
383 & 17 & 1 & 16 & 0\\
431 & 21 & 1 & 25 & 5\\
439 & 15 & 1 & 14 & 0\\
463 & 7 & 1 & 6 & 0\\
479 & 25 & 1 & 24 & 0\\
487 & 7 & 1 & 6 & 0\\
\hline
\end{longtable}

\noindent For example, when $q=23$, by SAGE calculation we obtain $H=\BQ(\alpha)$ where
$$
\alpha^6 - 3 \alpha^5 + 5 \alpha^4 - 5 \alpha^3 + 5\alpha^2 - 3\alpha + 1 = 0.
$$
The two fundamental units are then given by
$$
\alpha^5 - 2 \alpha^4 + 2\alpha^3 - \alpha^2 + 2\alpha, ~ \alpha^4-2\alpha^3 +3 \alpha^2 - 2\alpha + 2,
$$
and the $\fp$-adic regulator $R_\fp$ is given by
$$
2^2 + 2^4 + 2^6 + 2^7 + 2^8 + 2^9 + 2^{10} + 2^{13} + 2^{17} + 2^{20} + O(2^{23}).
$$

%%%%%%
\medskip
\section{Proof of \corref{cor1.2} and \thmref{thm1.3}}

Finally, we give simple proofs that \thmref{thm1.1} implies \corref{cor1.2} and \thmref{thm1.3}. For the corollary, let $J/H$ be any quadratic extension and let $J_\infty = JK_\infty$. Define $\Delta = \Gal(J_\infty/H_\infty)$. If $\Delta$ is trivial, then $J \subseteq H_\infty$, and so there is nothing more to prove. Hence we may assume that $\Delta$ is cyclic of order $2$. The group ring $\BZ_2[\Delta]$ is then a commutative local ring with maximal ideal $\fm$ generated by $2$ and $\delta-1$, where $\delta$ denotes the non-trivial element of $\Delta$. We will use Nakayama's lemma which asserts that, for any $\BZ_2[\Delta]$-module $M$, if there are elements $x_1, \cdots, x_m \in M$ whose images in $M/\fm M$ generate $M/\fm M$ over $\BF_2$, then they generate $M$ itself over $\BZ_2$. We note that, by maximality, $M(J_\infty)$ is clearly Galois over $K_\infty$. We have an exact sequence
$$
\xymatrix{
0 \ar[r] & X(J_\infty) \ar[r] & \Gal(M(J_\infty)/H_\infty) \ar[r] & \Delta \ar[r] & 0.
}
$$
Thus, as usual, $\Delta$ acts on $X(J_\infty)$ by inner automorphisms. In particular, it follows from this action that
\begin{equation}\label{6.1}
X(J_\infty)/(\delta-1)X(J_\infty) = \Gal(R/J_\infty)
\end{equation}
where $R$ denotes the maximal abelian extension of $H_\infty$ contained in $M(J_\infty)$. Our claim is that, under the hypothesis that $X(H_\infty)$ is a finitely generated $\BZ_2$-module, $\Gal(R/J_\infty)$ is a finitely generated $\BZ_2$-module. It follows that $X(J_\infty)/\fm X(J_\infty)$ is a finite dimensional vector space over $\BF_2$, and hence, by Nakayama's lemma, $X(J_\infty)$ is a finitely generated $\BZ_2$-module.

Let $S$ be the set of all primes of $H_\infty$, which do not lie above $\fp$, and which are ramified in $J_\infty$. If $S= \emptyset$, $J_\infty$ is contained in $M(H_\infty)$, in particular $M(J_\infty) = M(H_\infty)$, and hence there is nothing to prove. Otherwise, the set $S$ is finite. This is because, by a basic elementary property of the $\BZ_2$-extension $K_\infty/K$, there are only finitely many primes of $K_\infty$ lying above each prime of $K$, and thus the same is true for the primes of $H_\infty$ lying above a prime of $H$. Hence, as there are only finitely many primes of $H$ which ramifies in $J$, it follows that $S$ is finite. Moreover, the inertia subgroup in $\Gal(R/H_\infty)$ of each prime in $S$ must be of order $2$. Now let $R'$ be the fixed field of the subgroup of $\Gal(R/H_\infty)$ generated by the inertia subgroups of all primes in $S$. Obviously, we have
\begin{equation}\label{6.2}
[R:R'] \leq 2^{\#(S)}
\end{equation}
and $\Gal(R/R')$ is annihilated by $2$. But $R'/H_\infty$ is an abelian $2$-extension which is unramified outside $\fp$, and therefore we have $R' \subset M(H_\infty)$. Hence, by our hypothesis and \eqref{6.2}, $\Gal(R/H_\infty)$ is a finitely generated $\BZ_2$-module, and so is $\Gal(R/J_\infty)$. This completes the proof of \corref{cor1.2}.

For \thmref{thm1.3}, let $F$ and $F_\infty$ be the fields defined as in \eqref{1.4}. Then again the same classical argument (cf. the proof of Lemma 2.1 of \cite{CC}) shows that $E$ has good reduction everywhere over $F$. By \corref{cor1.2}, the Galois group $X(F_\infty)$ is a finitely generated torsion module over the Iwasawa algebra $\Lambda(\Gamma)$ of $\Gamma = \Gal(F_\infty/F)$. Hence, followed by classical arguments (for example, see \cite{JC1}), one can easily obtain
$$
S_{\fp^\infty}(E/F_\infty) = \mathrm{Hom}(X(F_\infty), E_{\fp^\infty}).
$$
Then \thmref{thm1.3} clearly follows immediately from Corollary \ref{cor1.2}.

%%%%%%
\subsection*{Acknowledgements}
The authors would like to thank John Coates for very helpful comments on this paper. The second author is supported by the SFB 1085 ``Higher invariants'' (University of Regensburg) funded by the DFG. The third author (Y.L.) would like to thank Ye Tian for the constant encouragement.

%%%%%%

\end{document}